\newtheorem{te}{Theorem}[section]
\newtheorem{prop}[te]{Proposition}
\newtheorem{co}[te]{Corollary}
\newtheorem{lemme}[te]{Lemma}
\theoremstyle{definition}
\newtheorem{de}[te]{Definition}
\newtheorem{ex}[te]{Example}
\theoremstyle{remark}
\newtheorem{rque}[te]{Remark}
\newlength{\plarg}
\renewcommand{\phi}{\varphi} 
\newcommand{\m} {^{-1}}
\newcommand {\ra} {\rightarrow}
\newcommand {\onto} {\twoheadrightarrow}
\newcommand {\xra} {\xrightarrow}
\newcommand{\actson}{\curvearrowright}
\newcommand{\semidirect}{\ltimes}
\newcommand{\ie} {i.e.\ }
\newcommand {\calc} {{\mathcal {C}}}
\newcommand {\cali} {{\mathcal {I}}}
\newcommand {\bbC} {{\mathbb {C}}}
\newcommand {\bbF} {{\mathbb {F}}}   
\newcommand {\bbH} {{\mathbb {H}}}
\newcommand {\bbN} {{\mathbb {N}}}
\newcommand {\bbQ} {{\mathbb {Q}}}   
\newcommand {\bbR} {{\mathbb {R}}}
\newcommand {\bbZ} {{\mathbb {Z}}}
\newcommand{\grp}[1]{\langle #1 \rangle}
\newcommand{\Comm} {{\mathrm{Comm}}}
\newcommand{\ad} {{\mathrm{ad}}}
\newcommand{\Cent}{\mathrm{Cent}}
\newcommand{\xonto}[2][]{%
  \xrightarrow[#1]{#2}\mathrel{\mkern-14mu}\rightarrow
}
\title{Finitely generated simple sharply 2-transitive groups}
\author{Simon André and Vincent Guirardel}
\date{\today}
\begin{document}
\begin{minipage}{\linewidth}

\maketitle

\vspace{0mm}

\begin{abstract}
We construct the first examples of infinite sharply 2-transitive groups which are finitely generated. Moreover, we construct such a group that has Kazhdan property (T), is simple, has exactly four conjugacy classes, and we show that this number is as small as possible.
\end{abstract}

\end{minipage}

\vspace{5mm}

\section{Introduction}

\thispagestyle{empty}

An action of a group $G$ on a set $X$ containing at least two elements is said to be \emph{sharply $2$-transitive} if, for any two couples $(x_1,x_2)$ and $(y_1,y_2)$ of distinct elements of $X$, there exists a unique element of $G$ mapping $(x_1,x_2)$ to $(y_1,y_2)$. A group $G$ is called \emph{sharply $2$-transitive} if there exists a set $X$ on which $G$ acts sharply $2$-transitively. 
For instance, if $K$ is any skew field, the natural action of the affine group $K^*\ltimes K$ on $K$ is sharply 2-transitive. This generalizes to a near-field $K$ where only one of the two distributive rules is required (see for instance \cite[\S 7.6]{Dixon_Mortimer}). 
We say that $G$ is \emph{nearly affine} if $G\simeq K^*\ltimes K$ for some near-field $K$ (this notion is also referred to as \emph{split} in the literature).

Zassenhaus proved that every finite sharply $2$-transitive group $G$ is 
nearly affine,
%isomorphic to the affine group $K^*\ltimes K$ of a finite near-field $K$, 
and classified all finite near-fields in \cite{Z1,Z2}.
In the same direction, Tits proved in \cite{Tits2,Tits3} that if $G$ is a locally compact connected group having a continuous sharply 2-transitive action on a locally compact set,
then $G$ is nearly affine. 
%$G\simeq K^*\ltimes K$ for some near-field $K$. 
In fact, $G\simeq K^*\ltimes K$ where $K$ is either
$\bbR$, $\bbC$, $\bbH$ or a near-field obtained by twisting the multiplication on $\bbH$. More recently,
Glasner and Gulko \cite{GG2} and Glauberman, Mann and Segev \cite{GMS} proved that if $G$ is any linear group having a sharply 2-transitive action of characteristic distinct from 2 (see Subsection \ref{characteristic} for a definition),
then again, $G$ is nearly affine. 
%See also \cite{GiMa} for sharply 2-transitive groups of homeomorphisms of the circle.
%$G\simeq K^*\ltimes K$ for some near-field $K$.

Until recently, it was an open problem whether every sharply 2-transitive group $G$ is 
nearly affine.
%isomorphic to $G\simeq K^*\ltimes K$ for some near-field $K$.
The first counterexamples were constructed in \cite{RST} and \cite{RT} by Rips, Segev and Tent. 
Then, infinite simple sharply 2-transitive groups were constructed in \cite{AT21}. 

But none of the examples constructed in \cite{RST,RT,AT21} is finitely generated,
so one may ask whether Zassenhaus classification extends assuming finite generation:
if $G$ is a finitely generated sharply 2-transitive group, does $G$ have to be nearly affine?

In fact, there is an even more basic question: 
does there exist an infinite, sharply 2-transitive group $G$ which is finitely generated?
%In fact, this question may sound strange as there is no known example of
%and the problem of the existence of an infinite sharply 2-transitive group which is finitely generated is open.
Indeed, no group of the form $G=K^*\semidirect K$ for a skew field $K$ is known to be finitely generated as the existence of an infinite skew field $K$ whose multiplicative group is finitely generated is a famous open problem
(see for instance \cite{AkMa,AKPR}).

%Thus, there was no example of a near-domain whose group of affine transformations is finitely generated.

%Moreover, none of the examples constructed in \cite{RST,RT,AT21} is finitely generated,
%so the problem whether sharply 2-transitive groups are nearly affine is open for 
%finitely generated groups.

In the present paper, we answer the two questions above:
we construct the first examples of infinite sharply 2-transitive groups which are finitely generated, and our examples are not nearly affine.
These are also the first examples of such infinite groups having Kazhdan property (T).
%Moreover, these groups do not come from a 
%Moreover, these are the first examples of such groups which are infinite and have Kazhdan property (T).

\begin{te}\label{thm_fg}
There exists an infinite, finitely generated, sharply 2-transitive group $G$. 
Moreover, one can construct such a group $G$ which has property (T), is 2-generated and simple (and in particular, not nearly affine).
\end{te}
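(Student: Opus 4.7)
The plan is to build $G$ as the limit of an iterative sequence of quotients $G_0 \onto G_1 \onto \cdots \onto G$ of a $2$-generated seed group $G_0$ with Kazhdan's property (T), equipped with a malnormal subgroup $H_0 \subset G_0$ whose image $H$ in $G$ will be the point stabilizer of the sharply $2$-transitive action on $G/H$. Since property (T) and $2$-generation are inherited by quotients, both properties are automatic for $G$ as soon as the construction goes through; sharp $2$-transitivity is equivalent to $H$ being malnormal in $G$ and $G$ acting $2$-transitively on $G/H$.

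\textbf{Seed group and iteration.} I would take $G_0$ to be a $2$-generated, non-elementary, relatively hyperbolic group $(G_0, H_0)$ with property (T) and $H_0$ malnormal. A natural candidate is a suitable quotient of a uniform lattice in $\mathrm{Sp}(n,1)$, or the relatively hyperbolic scaffolding used in \cite{AT21}, cut down to be finitely generated while preserving (T). Enumerate the countable family of pairs $\bigl((a_1 H_n, a_2 H_n),(b_1 H_n, b_2 H_n)\bigr)$ of distinct $2$-subsets of $G_n/H_n$; at stage $n$, pass to a quotient $G_{n+1}$ in which some new element sends the first pair to the second, while enforcing (i) $H_n \hookrightarrow G_{n+1}$ injectively with malnormal image, (ii) the quotient is non-trivial and infinite, and (iii) additional relators collapse conjugacy classes so that in the limit $H\setminus\{1\}$ splits into at most two $G$-conjugacy classes and the involutions form a single class. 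In the limit $G$, any non-trivial normal subgroup $N \normal G$ meets some conjugate of $H$ non-trivially (by the partition of $G$ into $\bigcup_g g H g^{-1}$ and involutions), hence contains a full conjugacy class of $H\setminus\{1\}$, hence all of $H$, hence by $2$-transitivity on $G/H$ equals $G$; this yields simplicity and bounds the number of conjugacy classes by four.

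\textbf{Main obstacle.} The decisive technical step is realizing $2$-transitivity by a genuine \emph{quotient} rather than an HNN extension, since extensions do not preserve property (T) and were the mechanism used in \cite{RST,RT,AT21}. This requires a relative small cancellation or Dehn-filling type theorem for groups with a distinguished malnormal peripheral subgroup: given any two pairs of distinct cosets of $H_n$, one must produce a non-trivial quotient $G_{n+1}$ that identifies them under some element, while keeping $H_n$ injectively embedded, malnormal, and without collapsing the ambient group. Controlling all these constraints simultaneously through an infinite iteration — and, on top of that, injecting enough relators to drive the number of conjugacy classes down to four without killing the group — is where I expect the main difficulty to lie, and it is the step that distinguishes this construction from all previous ones.
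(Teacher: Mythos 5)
Your proposal correctly identifies the two load-bearing ingredients — iterated small-cancellation/Dehn-filling quotients of a relatively hyperbolic group, and the fact that property (T) and 2-generation pass to the limit because everything is eventually generated by the image of a fixed (T) subgroup — but the way you propose to produce the sharply 2-transitive action has a genuine gap. You want to realize 2-transitivity on $G_n/H_n$ by passing to a quotient ``in which some new element sends the first pair to the second'': a quotient has no new elements, so you would have to impose relations forcing \emph{existing} elements to move the given pairs of cosets, while simultaneously keeping $H_n$ injectively embedded and malnormal, keeping the group infinite, and doing this for all countably many pairs; this is exactly the step you flag as the ``main obstacle,'' and your outline does not resolve it. Moreover you conflate two subgroups that must play different roles: the 2-generated (T) subgroup whose image eventually generates $G$ cannot also be the malnormal point stabilizer of the action (in characteristic 0 a point stabilizer is the centralizer of an involution, hence has 2-torsion and a central involution, so it is not a torsion-free hyperbolic (T) group). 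Your simplicity argument also rests on the unjustified partition of $G$ into conjugates of $H$ and involutions: in characteristic 0 the fixed-point-free elements include all translations, which have infinite order.

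The paper sidesteps the obstacle by decoupling the two tasks. First it constructs a \emph{countable, non-finitely-generated} sharply 2-transitive group $G_0$ of characteristic 0 lying in a robust class $\calc'$, using HNN extensions and amalgams (Sections 3--4) — property (T) is irrelevant at that stage, so extensions are permitted. It then forms $G_1=G_0*H_1$ with $H_1$ a 2-generated torsion-free hyperbolic group with (T), views $G_1$ as hyperbolic relative to the peripheral subgroup $P=G_0$, and iterates Osin's small-cancellation theorem (Theorem \ref{variante}, via Proposition \ref{prop_star}) to conjugate every virtually cyclic subgroup into $P$ and push designated elements into the image of $H_1$, \emph{while remaining injective on $P$}. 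In the limit every pair of involutions is conjugate into a faithful copy of $G_0$, so sharp 2-transitivity is \emph{inherited} from the peripheral subgroup rather than created by the quotients; 2-generation and (T) follow because the limit is a quotient of $H_1$; and simplicity follows because every conjugacy class meets the simple group $G_0$. The missing idea in your proposal is precisely this pre-construction of a sharply 2-transitive peripheral subgroup combined with a small-cancellation theorem that is injective on the peripheral subgroup.
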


\begin{rque}
The sharply 2-transitive groups constructed in this paper have characteristic 0, meaning that involutions have a fixed point and that the products of distinct involutions have infinite order
(see Subsection \ref{characteristic} for more details). % for a definition of the characteristic of a sharply 2-transitive group).
\end{rque}

%\begin{rque}
% It is well known (see \cite[Chapter~II \S 6]{Kerby}) that every sharply 2-transitive group is the group of affine transformations of a \emph{near-domain} (which is not a semi-direct product in general). Thus, Theorem \ref{thm_fg} can be reformulated as follows, and can be compared to the question of the existence of skew fields with finitely generated
% multiplicative group (or equivalently, with finitely generated affine group).
% 

In fact, it is well known (see \cite[Chapter~II \S 6]{Kerby}) that every sharply 2-transitive group is the group of affine transformations of a \emph{near-domain}: this is a structure $(K,+,\cdot)$ similar to a near-field but in which $(K,+)$ is not required to be a commutative group but only a loop
with a twisted associativity condition.
%. In this context, %and for which
In this context, the group $G$ of affine transformations $x\mapsto ax+b$ of $K$ acts sharply 2-transitively on $K$, but it is not a semi-direct product in general. Actually, $G$ is a semi-direct product if and only if $K$ is a near-field, in which case $G$ is nearly affine (see \cite[Theorem 7.1]{Kerby}).
Thus, examples of exotic sharply 2-transitive groups yield exotic near-domains, and Theorem \ref{thm_fg} can be reformulated in terms of near-domains as follows.

\begin{co}
There exists an infinite near-domain $K$ which is not a near-field and whose associated group of affine transformations 
is finitely generated. 
\end{co}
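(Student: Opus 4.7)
The plan is to derive this corollary directly from Theorem \ref{thm_fg} together with the structural result of Kerby recalled in the paragraph immediately preceding the statement. First I would apply Theorem \ref{thm_fg} to obtain a finitely generated, infinite, simple group $G$ acting sharply 2-transitively on some set $X$. By \cite[Chapter~II \S 6]{Kerby}, $G$ is isomorphic to the group of affine transformations $x\mapsto ax+b$ of some near-domain $(K,+,\cdot)$, and $X$ can be identified with $K$ (so in particular the group of affine transformations of $K$ is finitely generated, being isomorphic to $G$).

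Next I would check that $K$ is infinite: this is immediate since in any sharply 2-transitive action on a set $X$ we have $\abs{G}=\abs{X}(\abs{X}-1)$, and $G$ is infinite, so $\abs{X}=\abs{K}$ is infinite too.

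Finally, I would argue that $K$ is not a near-field by contradiction. If $K$ were a near-field, then by \cite[Theorem 7.1]{Kerby} the affine group $G$ would split as $K^*\semidirect K$, i.e.\ $G$ would be nearly affine. But the final clause of Theorem \ref{thm_fg} guarantees a $G$ that is simple and in particular not nearly affine, contradicting this splitting. Hence $K$ is not a near-field, completing the proof.

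There is essentially no real obstacle here: all the difficulty is absorbed either in Theorem \ref{thm_fg} (whose proof is the heart of the paper) or in the classical dictionary of Kerby between sharply 2-transitive groups and near-domains. The only thing to be careful about is to invoke the simplicity of $G$ (rather than just finite generation) to conclude that $K$ is not a near-field, since the existence of an infinite finitely generated near-field is itself a famous open problem and would not, on its own, rule this possibility out.
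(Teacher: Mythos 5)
Your proposal is correct and follows exactly the route the paper intends: the corollary is presented as a direct reformulation of Theorem \ref{thm_fg} via Kerby's correspondence between sharply 2-transitive groups and near-domains, with the non-near-field conclusion coming from the fact that the simple group of Theorem \ref{thm_fg} is not nearly affine, hence by \cite[Theorem 7.1]{Kerby} cannot arise from a near-field. Your closing remark correctly identifies the one point that matters, namely that finite generation alone would not suffice to rule out the near-field case.
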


We also consider the problem of minimizing the number of conjugacy classes.
% We observe that in characteristic distinct from 2, any sharply 2-transitive group has at least four conjugacy classes (except for the single exception $G=S_3$, see Subsection \ref{optimal_number} for details), 
First, any sharply 2-transitive group has to have at least three conjugacy classes (except for $\bbZ/2\bbZ$), 
and at least four conjugacy classes
in characteristic distinct from $2$ (except for the affine group on $\bbF_3$), 
%exceptions \Vmodif $G=\sym{2}$ or $\sym{3}$), 
see Proposition \ref{optimal}. 
Actually, except for the affine groups on $\bbF_2$, $\bbF_3$ and $\bbF_4$, 
all \emph{finite} sharply 2-transitive groups have at least five conjugacy classes. 

% \textcolor{blue}{Je suis d'accord avec ta demande de nuance. Mais je ne comprends plus.
% J'allais dire:
% "On the other hand, for each prime power $q=p^r>0$, Cohn constructed an infinite skew field of characteristic $p$
% with exactly $q$ conjugacy classes (ou plus simplement le meme enonce avec $q=p$)."
% En fait je vois pas comment deduire ça du thm 6.3 ou du corollaire 1 (encore moins 2) de Cohn, en fait meme pour q=2.
% Si on part du corps de fractions rationnelles $F_p(X)$, le theoreme 6.3 n'interdit pas qu'on rajoute des elements algebriques quand on fait nos HNN,
% ce qui fait que meme si tous les elements ayant le meme polynome minimal sont conjugues, peut-etre qu'on a une infinité de polynomes minimaux qui apparaissent.
% C'est vraiment vrait que le thm implique qu'il y a un corps dont le groupe multiplicatif a 2 classes de conj ?
% }

On the other hand, Cameron \cite{Cameron_classical} mentions that Cohn's construction \cite[Theorem 6.3]{Coh71} can be used
to construct an infinite skew field $K$ of characterisic $p$ such that 
$K^*$ has exactly $p$ conjugacy classes.
For $p=2$, the corresponding affine group $K^*\ltimes K$ would have exactly three conjugacy classes.
% whose multiplicative group $K^{*}$ has exactly two conjugacy classes  
% (see Corollary 2 of Theorem 6.3 in \cite{Coh71}), and its corresponding affine group $K^*\ltimes K$ has three conjugacy classes.
% But no such example is known in other characteristics\Scomment{je nuancerais un peu, parce que la construction de Cohn produit un skew field $K$ de n'importe quelle carac $p<\infty$ tel que $K^*$ a $p$ classes de conjugaison, et donc $GA(K)$ en a $p+1$ (voir détails ligne 435 en gris).}, and 
%in all the previously known sharply 2-transitive groups of characteristic 0, the number of conjugacy classes is infinite. 
In characteristic 0, all the previously known sharply 2-transitive groups have infinitely many conjugacy classes.

Also note that in the realm of abstract groups, the first infinite, finitely generated groups with only finitely many conjugacy classes were constructed by Ivanov, and a construction can be found in \cite{Ol91} (see Theorem 41.2). This construction yields for each sufficiently large prime number $p$ 
a group of exponent $p$ with exactly $p$ conjugacy classes. Then, Osin constructed in \cite{Osi10} the first infinite finitely generated groups with exactly $n$ conjugacy classes for any natural number $n\geq 2$.

% We observe that in characteristic distinct from 2, any sharply 2-transitive group has at least four conjugacy classes (except for the single exception $G=S_3$, see Subsection \ref{optimal_number} for details), 
Our construction, relying on Osin's small cancellation techniques \cite{Osi10}, 
provides examples of groups which are sharply 2-transitive 
in characteristic 0, are finitely generated, and have 
the smallest possible number of conjugacy classes.
%exactly four conjugacy classes.
%and we prove that the number four can actually be reached.

%\begin{qu}We denote by $c(G)$ the number of conjugacy classes in a group $G$.\begin{enumerate}\item Is there a S2T group $G$ of characteristic 0 with $c(G)<\infty$, or even with $c(G)=4$?\item Is there an infinite simple S2T group $G$ with $c(G)<\infty$, or even with $c(G)=4$?\end{enumerate}\end{qu}We will answer all these questions positively.

\begin{te}\label{thm_4classes}
There exists an infinite simple sharply 2-transitive group which is finitely generated, has property (T), and has exactly four conjugacy classes.
\end{te}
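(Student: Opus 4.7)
The plan is to refine the construction that yields Theorem \ref{thm_fg} in order to collapse the number of conjugacy classes down to the minimum of four allowed by Proposition \ref{optimal} in characteristic distinct from $2$. In a sharply 2-transitive group of characteristic $0$, the elements fall naturally into four conjugation-invariant subsets: the identity; the set $I$ of involutions (already a single class, since their unique fixed points are permuted 2-transitively); the set $T\setminus\{1\}$ of non-trivial translations (elements with no fixed point); and the set $E$ of non-involution elliptic elements. The task is therefore to force all of $T\setminus\{1\}$ into one class and all of $E$ into one class, while preserving the sharply 2-transitive action on $X$, simplicity, property (T), and finite generation.

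I would build the group as a direct limit of an inductive small-cancellation tower in the spirit of Osin \cite{Osi10}, starting from a group $G_0$ of the type produced by Theorem \ref{thm_fg}. The key is that $G_0$ comes equipped with a hyperbolically embedded structure (a free subgroup, say) that is preserved by each small-cancellation quotient and gives a reservoir of conjugating elements at every step. Enumerating all pairs $(g_n,h_n)$ where either both lie in $T\setminus\{1\}$ or both lie in $E$, at stage $n$ I would pass to a quotient $G_{n+1}=G_n/\ngrp{r_n}$ where $r_n$ is an Osin-style small-cancellation relator of the form $t_n g_n t_n\m h_n\m$, with $t_n$ chosen inside the hyperbolically embedded subgroup. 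The direct limit $G=\varinjlim G_n$ then has the identity, $I$, and exactly one class of translations and one class of non-involution elliptic elements, hence four conjugacy classes; it is 2-generated and simple, and property (T) is inherited because each quotient is of a type known to preserve (T).

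The main technical obstacle, and the reason this cannot be done by a soft argument, is ensuring that the sharply 2-transitive action on $X$ descends through every quotient. Imposing the relation $r_n=1$ must not produce any new involution, must not give any non-identity element more than one fixed point, must preserve the characteristic-$0$ condition that products of distinct involutions have infinite order, and must not turn a translation into a point-stabilizer element or vice versa. I would handle this by choosing each conjugator $t_n$ within (or combined with elements of) the kernel of the sharply 2-transitive action up to the point where $t_n g_n t_n\m$ and $h_n$ already agree on $X$, and by engineering $r_n$ so that the small-cancellation hypotheses of \cite{Osi10} simultaneously imply both the Dehn-type solvability giving non-triviality of the quotient and the preservation of the sharp 2-transitivity axioms; this is the same mechanism that must already drive the simplicity step in Theorem \ref{thm_fg}, so the conjugation-collapsing relators can be folded into the same inductive scheme. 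Property (T) is maintained by working relative to a fixed Kazhdan generating pair throughout.
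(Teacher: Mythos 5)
There is a genuine gap, and in fact the architecture of your argument is inverted relative to what actually works. You propose to start from the finitely generated group of Theorem~\ref{thm_fg} and then pass to further small-cancellation quotients $G_{n+1}=G_n/\ngrp{r_n}$ to merge conjugacy classes. But the group of Theorem~\ref{thm_fg} can be taken simple, and an infinite simple group admits no proper non-trivial quotients, so the tower collapses; and if you instead start from a non-simple finitely generated version, you have no mechanism left to produce simplicity at the end (four conjugacy classes do not imply simplicity: the paper needs the specific trick of arranging two homotheties $h=u_1u_2u_3$ and $h'=u_1u_2u_3u_4$ so that any normal subgroup containing the homotheties contains the involution $h'h^{-1}$). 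More seriously, your proposed safeguard for preserving the sharply 2-transitive action --- choosing conjugators ``within the kernel of the sharply 2-transitive action'' --- is vacuous, since a sharply 2-transitive action is faithful. You correctly identify descent of sharp 2-transitivity through quotients as the main obstacle, but you do not supply a workable mechanism for it, and imposing relators $t_ng_nt_n^{-1}h_n^{-1}$ with $t_n$ an \emph{existing} element is not what Osin's machinery (Theorem~\ref{variante}) delivers; it provides quotients pushing prescribed elements into a suitable subgroup while remaining injective on a peripheral subgroup.

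The paper's actual route decouples the two difficulties. First (Section~\ref{sec_4conj}), it \emph{enlarges} a seed group by an infinite iteration of controlled HNN extensions (Corollaries~\ref{cor_HNN_paire}, \ref{cor_HNN_isolated}, \ref{cor_HNN_homothetie}) to obtain a countable, not finitely generated, simple sharply 2-transitive group $G_0$ of characteristic $0$ in $\calc'$ with exactly four classes: here conjugating two homotheties or killing an isolated element is achieved by adding a \emph{new} stable letter, which never destroys the structure. Second (Section~\ref{sec_fg}), it forms $G_0*H_1$ with $H_1$ a $2$-generated torsion-free hyperbolic group with property (T), viewed as hyperbolic relative to $P=G_0$, and iterates Proposition~\ref{prop_star}: each step first HNN-extends to conjugate a given virtually cyclic subgroup into $P$, then applies Theorem~\ref{variante} to push the stable letter into $H_1$ while staying \emph{injective on $P$}. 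In the direct limit every element is conjugate into the untouched copy of $G_0$, so the four-class count, simplicity, and the sharply 2-transitive action are all \emph{imported} from $G_0$ rather than preserved through relators acting on a fixed set $X$. Your proposal is missing both this preliminary non-finitely-generated construction and the peripheral-subgroup mechanism that makes the collapsing step safe.
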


%\Vcomment{Rq deplacee plus haut}
% \begin{rque}
% It is worth noting that, except for three exceptions (namely the affine groups on $\bbF_2$, $\bbF_3$ and $\bbF_4$), all finite sharply 2-transitive groups have at least five conjugacy classes. This shows that sharply 2-transitive groups with exactly four conjugacy classes are particularly exotic.\Scomment{Ajout. Pas très satisfait de cette phrase... et faut-il la laisser en remarque ou l'inclure dans le paragraphe avant le théorème ?} 
% \end{rque}

The proof of our results combines the strategy used in \cite{RT} and \cite{AT21} with small cancellation theory over relatively hyperbolic groups, as developed notably by Osin in \cite{Osi10} to construct infinite finitely generated groups with exactly two conjugacy classes.

%\newpage
\paragraph{Acknowledgements.}The first named author would like to thank Katrin Tent for stimulating discussions, and acknowledges support from the Deutsche Forschungsgemeinschaft (DFG, German Research Foundation) under Germany’s Excellence Strategy EXC 2044–390685587, Mathematics Münster: Dynamics-Geometry-Structure and from CRC 1442 Geometry: Deformations and Rigidity. The second named author benefits from the support of the French government “Investissements d’Avenir” program ANR-11-LABX-0020-01 and from the ANR project ANR-22-CE40-0004.

\section{Conjugacy classes in a sharply 2-transitive group}\label{sec_minoration}

\subsection{Characteristic of a sharply 2-transitive group}\label{characteristic}

If $G$ acts sharply 2-transitively on a set $X$ (with $\vert X\vert \geq 2$), then $G$ has involutions (take any element exchanging two points $x,y\in X$), 
and all involutions are conjugate (any involution swaps a pair of points, and two involutions can be conjugate to swap the same pair of points).

By analogy with the affine group %$GA(K)=
$K^*\semidirect K$, one says that $G\actson X$ is of characteristic $2$
if involutions have no fixed point.

%\Vcomment{Il est peut-etre vrai qu'on peut parler de la caractéristique de $G$ sans parler de $X$, mais ca ne me semble pas immediat de distinguer la caracteristique 2 d'une autre caractéristique sans parler de $X$. Comme ca peut porter a confusion plus qu'autre chose, j'aime autant parler de caracteristique de $G\actson X$ plutot que de caractéristique de $G$.}\Scomment{C'est vrai, a priori je pense qu'un groupe pourrait être S2T de carac 2 et 0, mais je n'ai pas d'exemple.}
When the action is not of characteristic $2$, every involution has a unique fixed point, and distinct involutions have distinct fixed points (see for instance \cite[Corollary 3.2]{Tent2}). 
It follows that there is a natural equivariant bijection between $X$ and the set $\cali_G$ of involutions of $G$, on which $G$ acts by conjugation. In particular, a group has a sharply 2-transitive action of characteristic distinct from 2 on a set $X$ if and only if it acts sharply 2-transitively on its set of involutions.
It also follows that all pairs of distinct involutions are conjugate.
By analogy with the affine group in characteristic $\neq 2$, we say that an element of $G$ is a \emph{translation} if it is the product of two distinct involutions. 
Thus, in characteristic $\neq 2$, all translations are conjugate, and one says that $G\actson X$
has characteristic 0 if translations have infinite order (\ie if two distinct involutions generate an infinite dihedral group)
and characteristic $p$ if translations have order $p$ (\ie any two distinct involutions generate a dihedral group of order $2p$),
and $p$ has to be some prime number (otherwise the subgroup generated by a translation $t$ of order $p$ would contain a translation $t'$ of order $p'<p$, thus not conjugate to $t$).
% Using the natural bijection between $X$ and involutions of $G$, one sees that the characteristic of $G\actson X$ depends only on $G$.
Note that if $K$ is a field, then the characteristic of $K$ in the usual sense coincides with the characteristic of $K^*\ltimes K$ as defined above.
The sharply 2-transitive groups we will construct in this paper will have characteristic 0.

%Moreover, if every involution has a fixed point (which is necessarily unique), then all translations are conjugate and we define the \emph{characteristic} of $G$ as follows: if the order of a translation is infinite then $\mathrm{char}(G):=0$, and if the order of a translation is finite then it is necessarily a prime number $p>2$ and we define $\mathrm{char}(G):=p$. If involutions act freely on $X$, we say that $G$ has characteristic 2. Note that if $K$ is a field, then the characteristic of $K$ in the usual sense coincides with the characteristic of $GA(K)$ as defined above.

\subsection{A lower bound on the number of conjugacy classes}\label{optimal_number}

A sharply 2-transitive group with exactly two conjugacy classes is necessarily abelian since all its non-trivial elements have order 2. 
But the only abelian sharply 2-transitive group is $\mathbb{Z}/2\mathbb{Z}$ (because the stabilizer of a point in such a group is malnormal), 
and hence every sharply 2-transitive group of order $>2$ has at least three conjugacy classes. 

The only finite sharply 2-transitive group with exactly three conjugacy classes is the symmetric group $S_3$, but there are infinite examples: 
Cohn constructed a skew field $K$ of characteristic 2 whose multiplicative group $K^{*}$ has exactly two conjugacy classes 
(see Corollary 2 of Theorem 6.3 in \cite{Coh71}), and the corresponding affine group $K^*\ltimes K$ has exactly three conjugacy classes.
%(indeed, the action of $k\in K^{*}$ on $K$ is defined by $k\cdot x=kx$, and therefore $K^{*}$ acts transitively on $K^{*}$). 

The goal of this section is Proposition \ref{optimal} below showing that any sharply 2-transitive group $G\actson X$ of characteristic $\neq 2$ with $|X|\geq 4$ 
has at least 4 conjugacy classes. Moreover, in characteristic 2, the only possibility to have at most 3 conjugacy classes
comes from near-fields of characteristic 2.
%the number of conjugacy classes in a sharply 2-transitive group $G$ of order $>6$ is at least four provided that $\mathrm{char}(G)\neq 2$ 
%(see Proposition \ref{optimal} below), or provided that $G$ has no non-trivial abelian normal subgroup (see Corollary \ref{optimal2}).
%which proves that the number of conjugacy classes in the group constructed in the previous subsection is optimal.
In Section \ref{sec_4conj}, we will construct infinite sharply 2-transitive groups of characteristic 0 with exactly 4 conjugacy classes.

\begin{lemme}[{\cite[Corollary 3.2]{Tent2}}]\label{lem_fix} 
  Let $G$ be a group acting sharply 2-transitively on a set $X$, with $\vert X\vert\geq 2$. Then two distinct involutions cannot have a common fixed point in $X$.

%  Equivalently, for all $x\in X$, $G_x$ contains at most one involution, 
%  and exactly one in characteristic $\neq 2$.
\end{lemme}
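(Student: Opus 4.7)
My plan is to argue by contradiction: suppose $i_1\ne i_2$ are two distinct involutions of $G$ that both fix a point $x\in X$.

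The first step is to extract structural information from sharp 2-transitivity. The stabilizer $H:=\Stab_G(x)$ acts regularly (freely and transitively) on $X\setminus\{x\}$: freeness follows because an element of $H$ fixing some $y\neq x$ would fix the pair $(x,y)$ and hence be trivial by sharp 2-transitivity, while transitivity is equivalent to the $2$-transitivity of $G$ on $X$. In particular, both $i_1$ and $i_2$ are nontrivial elements of $H$ that act without fixed points on $X\setminus\{x\}$. Moreover, for every $y\in X\setminus\{x\}$, one has $i_1(y)\neq i_2(y)$: otherwise the product $i_2 i_1\in H$ would fix both $x$ and $y$, and sharp 2-transitivity would force $i_2 i_1=e$, i.e.\ $i_1=i_2$.

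To close the argument, I would fix $y\in X\setminus\{x\}$ and set $y_1:=i_1(y)$, $y_2:=i_2(y)$, so that $x,y,y_1,y_2$ are four pairwise distinct points of $X$. The element $t:=i_1 i_2\in H$ is nontrivial and is conjugated to $t^{-1}$ by both $i_1$ and $i_2$; hence $\langle i_1,i_2\rangle\subseteq H$ is dihedral. I would then apply the uniqueness statement of sharp 2-transitivity to a carefully chosen ordered pair among $x,y,y_1,y_2$ (and perhaps $t$-translates thereof) in order to produce two a priori distinct $G$-elements realizing the same pair-to-pair map, forcing a relation that collapses to $i_1=i_2$.

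The main obstacle is pinpointing the right pair of ordered pairs for this final step. The regularity of the $H$-action on $X\setminus\{x\}$ is, by itself, compatible with $H$ containing a dihedral subgroup hosting several involutions, so the contradiction must really use the full sharp 2-transitivity of $G$ on $X$ (and the uniqueness of the $G$-element connecting any two ordered pairs), not merely the freeness of $H$ on $X\setminus\{x\}$. Since the lemma is quoted from \cite[Corollary 3.2]{Tent2}, I expect the missing step to be a short, standard pair chase of this flavor.
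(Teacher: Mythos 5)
The paper does not actually reprove this lemma: it observes that the statement is vacuous in characteristic $2$ (involutions then have no fixed points at all) and otherwise quotes \cite[Corollary 3.2]{Tent2}. So the only question is whether your argument is complete, and it is not: you set up the right configuration but explicitly stop before the step that carries all the content. Everything you establish (regularity of $G_x$ on $X\setminus\{x\}$, the four distinct points $x,y,y_1,y_2$ with $y_1=i_1(y)$, $y_2=i_2(y)$, the dihedral structure of $\langle i_1,i_2\rangle$) is routine and does not yet contradict anything — as you yourself note, a regular action of $H=G_x$ on $X\setminus\{x\}$ is perfectly compatible with $H$ containing several involutions. A ``plan'' that ends with ``apply uniqueness to a carefully chosen pair'' without exhibiting the pair is a genuine gap, since finding that pair is the entire proof.

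For the record, the missing step is short but not automatic. First note that a nontrivial involution has \emph{at most one} fixed point (two fixed points force triviality by sharpness), so $\Fix(i_1)=\Fix(i_2)=\{x\}$. Let $g$ be the unique element with $g(y)=y$ and $g(y_1)=y_2$; since $y_1\neq y_2$, $g\neq 1$. Then $gi_1g^{-1}$ is an involution sending $(y,y_2)$ to $(y_2,y)$ (compute: $y\mapsto g(i_1(y))=g(y_1)=y_2$ and $y_2\mapsto g(i_1(y_1))=g(y)=y$), and $i_2$ sends $(y,y_2)$ to $(y_2,y)$ as well; by uniqueness of the element realizing this pair-to-pair map, $gi_1g^{-1}=i_2$. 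Hence $g$ maps the unique fixed point of $i_1$ to that of $i_2$, i.e.\ $g(x)=x$, so $g$ fixes the two distinct points $x$ and $y$ and must be trivial — contradicting $g(y_1)=y_2\neq y_1$. Note that this final chase does not use the dihedral subgroup $\langle i_1,i_2\rangle$ or the element $t=i_1i_2$ at all; the relevant uniqueness is applied to the conjugating element $g$, not to $i_1,i_2$ themselves, which may be why you did not find it along the route you were pursuing.
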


\begin{proof}
  The lemma is obvious if the characteristic is $2$, \ie
  if involutions don't have fixed points. Otherwise, this is \cite[Corollary 3.2]{Tent2}.
\end{proof}

\begin{lemme}\label{lem_prod_invol}
Let $G$ be a group acting sharply 2-transitively on a set $X$, with $\vert X\vert\geq 2$. The product of two distinct involutions cannot fix a point.
\end{lemme}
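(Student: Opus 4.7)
The plan is to suppose for contradiction that some product $ij$ of two distinct involutions $i,j\in G$ fixes a point $x\in X$, and derive an equality $i=j$ from sharp 2-transitivity. The key observation is that $ij(x)=x$ combined with $j^2=1$ gives $i(x)=j(x)$: writing $y:=i(x)=j(x)$, both involutions send $x$ to the same point $y$, and then (because they are involutions) they send $y$ back to $x$ as well.

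I would then split into two cases according to whether $y=x$ or $y\neq x$. If $y=x$, then $x$ is a common fixed point of the two distinct involutions $i$ and $j$, which directly contradicts Lemma \ref{lem_fix}. If $y\neq x$, then $i$ and $j$ both send the ordered pair $(x,y)$ to $(y,x)$, and since the pairs $(x,y)$ and $(y,x)$ are pairs of distinct elements of $X$, sharp 2-transitivity forces $i=j$, contradicting the hypothesis that $i$ and $j$ are distinct.

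I don't expect any serious obstacle: this is a three-line argument once one notices the identity $i(x)=j(x)$ coming from $ij(x)=x$ and $j=j^{-1}$. The only thing to keep in mind is to cover both characteristics uniformly; in characteristic $2$ the case $y=x$ is automatically ruled out since involutions have no fixed points, while in characteristic $\neq 2$ it is ruled out by Lemma \ref{lem_fix}. Either way, the conclusion $i=j$ follows from sharp 2-transitivity applied to the pair $(x,y)$, so no point can be fixed by $ij$.
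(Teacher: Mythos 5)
Your proof is correct and follows essentially the same route as the paper: from $ij\cdot x=x$ one deduces that $i$ and $j$ act identically on $x$, the case of a common fixed point is excluded by Lemma \ref{lem_fix}, and otherwise both involutions swap $x$ with $y$, so sharp 2-transitivity gives $i=j$. The paper phrases this as ``if $v$ fixes $x$ so does $u$'' and otherwise ``$u$ and $v$ both exchange $x$ and $y$,'' which is the same case split and the same use of 2-sharpness.
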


\begin{proof} Assume towards a contradiction that a product $uv$ of two distinct involutions 
  fixes a point $x\in X$.
  If $v$ fixes $x$, so does $u$, which contradicts the previous lemma.
  Thus the involution $v$ exchanges the point $x$ and the point $y=v\cdot x\neq x$.
  Since $uv\cdot x=x$, $u$ exchanges $x$ and $y$. By $2$-sharpness, $u=v$.
\end{proof}

\begin{prop}\label{optimal}
Let $G\actson X$ be a sharply 2-transitive group with $|X|\geq 4$. 
\begin{itemize}
    \item If $G\actson X$ is not of characteristic 2, then $G$ has at least 4 conjugacy classes.
    \item If $G\actson X$ is of characteristic 2, then $G$ has at least 3 conjugacy classes, 
    and at least 4 conjugacy classes
unless $G\simeq K^*\ltimes K$ for some near-field $K$ such that all non-trivial elements of $K^{*}$ are conjugate.
\end{itemize}
\end{prop}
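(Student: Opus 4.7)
The plan is to split into cases by characteristic and analyze the conjugacy classes via the structure of point stabilizers, using $|X|\geq 4$ only at the end of each case to close the contradiction.

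In characteristic $\neq 2$, I would start by exhibiting three distinct conjugacy classes — $\{1\}$, the involutions $I$, and the translations $T$ — and check that the last two are genuinely distinct: if $uv$ were itself an involution for distinct involutions $u,v$, then $u$ and $v$ would commute, forcing $v$ to fix the unique fixed point of $u$, contradicting Lemma~\ref{lem_fix}. To force a fourth class I would argue by contradiction: if $G\setminus\{1\}=I\sqcup T$, inspect the point stabilizer $H=G_x$. Lemma~\ref{lem_prod_invol} gives $H\cap T=\es$, so $H\setminus\{1\}\subseteq I$; Lemma~\ref{lem_fix} gives $|H\cap I|\leq 1$; hence $|H|\leq 2$. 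But sharp 2-transitivity yields $|H|=|X|-1\geq 3$, contradiction. This part is routine.

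The characteristic $2$ case is where the real work lies. I would first rule out two classes directly: if $G=\{1\}\cup I$ then every element has order $\leq 2$ and $G$ is abelian; the malnormality of point stabilizers then forces $G\simeq \bbZ/2\bbZ$, violating $|X|\geq 4$. So at least three classes. Now suppose $G$ has exactly three classes $\{1\},I,C$; the goal is to identify $G$ with $K^*\ltimes K$ for a near-field $K$ whose non-identity elements are all conjugate in $K^*$. The pivotal observation is that ``having a fixed point on $X$'' is a conjugation-invariant property, so either every element of $C$ fixes a point or none does. The second alternative makes every non-identity element of $H$ empty, forcing $|X|=2$, contradiction. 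Hence every element of $C$ fixes some point, equivalently every fixed-point-free element of $G$ is an involution.

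From there the structural conclusion falls out quickly, which I would record as follows. For distinct $u,v\in I$, if $uv$ fixed a point $x$, then $v(x)\neq x$ and $u$ would swap $x$ with $v(x)$ exactly as $v$ does, forcing $u=v$ by sharpness; so $uv$ is fixed-point-free, hence an involution by the previous paragraph, hence $(uv)^2=1$ and $u,v$ commute. Therefore $N:=\{1\}\cup I$ is an abelian subgroup, normal in $G$ because $I$ is conjugation-stable, with $H\cap N=\{1\}$ (no involutions fix a point) and $G=HN$ (transitivity on $X$); so $G=H\ltimes N$ is split, and Kerby's theorem (\cite[Thm.~7.1]{Kerby}) realises $G$ as $K^*\ltimes K$ for some near-field $K$ with $K^*\simeq H$. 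Finally, any $G$-conjugacy $gh_1g^{-1}=h_2$ between non-trivial elements of $H$ forces $g\in H$: both $h_1$ and $h_2$ have $x$ as their unique fixed point, so $g^{-1}(x)=x$. Hence $H\setminus\{1\}\subseteq C$ being a single $G$-conjugacy class upgrades to a single $K^*$-conjugacy class, completing the statement.

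The main obstacle I expect is not any single calculation but locating the right structural pivot in characteristic $2$: the fact that $C$ being one conjugacy class, combined with the conjugation-invariance of ``fixes a point,'' is exactly what forces every fixed-point-free element to be an involution, and this in turn is exactly what is needed to close $N$ under multiplication and split $G$. Everything else is bookkeeping with Lemma~\ref{lem_fix}, Lemma~\ref{lem_prod_invol} and Kerby's theorem.
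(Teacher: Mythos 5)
Your proof is correct and follows essentially the same route as the paper: the characteristic $\neq 2$ case is the same point-stabilizer count via Lemmas \ref{lem_fix} and \ref{lem_prod_invol}, and the characteristic $2$ case ends, as in the paper, by showing that $\cali_G\cup\{1\}$ is a normal abelian subgroup and invoking Kerby's theorem. The only differences are organizational: you pivot on whether the third class fixes a point rather than on whether some translation fails to be an involution, and you spell out the upgrade from $G$-conjugacy to $K^*$-conjugacy of elements of the point stabilizer, a step the paper leaves implicit.
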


% \begin{rque}
% As explained before, Cohn constructed in \cite{Coh71} an infinite non-commutative field $K$ of characteristic 2 such that all elements of $K\setminus \lbrace 0,1\rbrace$ are conjugate in the multiplicative group $K^{*}$ of $K$, and hence $K^{*}$ has exactly two conjugacy classes and $GA(K)=K\rtimes K^{*}$ has exactly three conjugacy classes. More generally, for every commutative field $k$, Cohn constructed a non-commutative field $K\supset k$ such that the center of $K^{*}$ coincides with $k^{*}$ and all elements in $K^{*}\setminus k^{*}$ are conjugate (we refer the reader to the discussion following Corollary 2 of Theorem 6.3 in \cite{Coh71}). Taking for $k$ the finite field $\mathbb{F}_q$, we get a non-commutative field $K\supset \mathbb{F}_q$ such that $K^{*}$ has exactly $q$ conjugacy classes. Therefore, $GA(K)$ has exactly $q+1$ conjugacy classes.\end{rque}

%The following corollary of Proposition \ref{optimal} is immediate.

%\begin{co}\label{optimal2}\Vcomment{Je suis pas sûr de voir l'interet du corollaire par rapport a la prop}Let $G$ be a sharply 2-transitive group. If $G$ has no non-trivial abelian normal subgroup, then $G$ has at least four conjugacy classes.\end{co}

%We now prove Proposition \ref{optimal}.

\begin{proof}
We already saw at the beginning of this section that if $|X|\geq 3$, $G$ has at least 3 conjugacy classes.

Fix a point $x\in X$. By Lemmas \ref{lem_fix} and \ref{lem_prod_invol}, $G_x$ contains at most one involution and 
no translation (\ie a product of two distinct involutions). 
Moreover, $\vert X\setminus \lbrace x\rbrace\vert \geq 3$
and $G_x$ acts transitively on $X\setminus \lbrace x\rbrace$.
Thus, $G_x$ contains a non-trivial element $g_x$ which is neither an involution nor a translation.

Assume $G\actson X$ has characteristic $p\neq 2$, and consider two distinct involutions $u,v$.
Then the translation $uv$ is an element of order $p$ (or infinite order if $p=0$), hence is not an involution.
This shows that the elements $1,u,uv,g_x$ are in four distinct conjugacy classes.

In characteristic 2, the argument above says that if some translation $uv$ in not an involution
then $G$ has at least four conjugacy classes. 
We may thus assume that the product of any two distinct involutions is an involution, \ie that the set $\cali_G\cup\{1\}$ is a (normal) subgroup of $G$.
By \cite[Theorem 7.1]{Kerby}, $G=K^*\ltimes K$ for some near-field $K$ (where $K=\cali_G\cup\{1\}$ as an additive group). If $G$ has at most 3 conjugacy classes, then $K^*$ has at most 2 conjugacy classes.
%By \cite[Lemma 11.47]{BN}, we have $G=K\rtimes G_x$ where $G_x$ acts freely and transitively by conjugation on $K\setminus \lbrace 1\rbrace=\cali_G$. It can be proved that $K$ can be endowed with a near-field structure and that $G$ is isomorphic to the affine group $GA(K)=K\rtimes K^{*}$ (see for instance \cite{BN}, page 225). 
%If $G$ has $<4$ conjugacy classes, then $K^{*}$ has $<3$ conjugacy classes, and $K^{*}$ has $>1$ conjugacy classes since $G$ is not isomorphic to $S_2$ by assumption, and therefore $K^{*}$ has exactly two conjugacy classes.
\end{proof}

\section{The classes of groups $\calc$ and $\calc'$}

In this section that closely follows \cite{AT21,RT}, 
we introduce two classes of groups $\calc$ and $\calc'$
that are stable under various types of HNN extensions, amalgams and increasing union.
Any sharply 2-transitive group of characteristic 0 is in the class $\calc$, but the class $\calc'$ is more restrictive.
We show that these classes are stable under

\subsection{The class $\calc$}
 In \cite{AT21} (building on \cite{RT}), a class of groups $\mathcal{C}$ is introduced
that contains all sharply 2-transitive groups of characteristic 0 and which is preserved under various HNN extensions and amalgamated products. 
We reformulate this definition below (see Definition \ref{dfn_class_C}).

We denote by $D_\bbZ=\bbZ/2\bbZ\semidirect \bbZ$ the infinite dihedral group and by
$D_\bbQ=\bbZ/2\bbZ\semidirect \bbQ$ the group of isometries of $\bbQ$.

A subgroup $H\subset G$ is called \emph{malnormal} if the intersection $H\cap gH g^{-1}$ is trivial for every $g\in G\setminus H$, and \emph{quasi-malnormal} if $H\cap gH g^{-1}$ has order at most 2 for $g\in G\setminus H$. 

%If $X$ is a set, we denote by $X^{(2)}=\{(x,x')\in X^2 \ | \ x\neq x'\}$ the set of ordered pairs of elements of $X$.
Recall that we denote by $\cali_G$ the set of involutions of 
$G$, and we denote by $\cali_G^{(2)}=\{(u,v)\in \cali_G^2 \ | \ u\neq v\}$ 
the set of ordered pairs of involutions, on which $G$ acts by conjugation. 

\begin{de}\label{dfn_type} Let $G$ be a group, and $(u,v)\in\cali_G^{(2)}$ be a pair of distinct involutions.
  \begin{itemize}
  \item   We say that $(u,v)$ is of type $D_\bbZ$ if $\grp{u,v}$ is contained in
    a (necessarily unique) group $D_{u,v}$ isomorphic to $D_\bbZ$ with $D_{u,v}$ quasi-malnormal in $G$.
    If $\grp{u,v}=D_{u,v}$ we say that the pair $(u,v)$ is \emph{maximal}.
  \item We say that $(u,v)$ is of type $D_\bbQ$ if $\grp{u,v}$ is contained in a (maybe non-unique, maybe not quasi-malnormal) group isomorphic to $D_\bbQ$.
\end{itemize}
\end{de}

\begin{rque}\label{rem_type}\ 
  \begin{enumerate}
  \item If $(u,v)$ is of one of the two types, then $\grp{u,v}$ is infinite dihedral.

  \item \label{it_malnormal} If $(u,v)$ is of type $D_\bbZ$ and $(u',v')$ is of type $D_\bbQ$, then $\grp{u,v}\cap \grp{u',v'}$ has cardinality at most 2. In particular, the two types are mutually exclusive.
    
\item \label{it_commensurator} Recall that the \emph{commensurator} $\Comm_G(H)$ of a subgroup $H\subset G$ is the subgroup consisting of elements $g\in G$ such that the intersection of $H$ and $gHg^{-1}$ has finite index in both of them. The pair $(u,v)$ is of type $D_\bbZ$ if and only if the commensurator $\Comm_G(\grp{u,v})$ is an infinite dihedral group. In this case, $D_{u,v}=\Comm_G(\grp{u,v})$ is the unique maximal infinite dihedral group containing $\grp{u,v}$.
  
%\item \label{it_commensurator2} If $(u,v)$ is of type $D_\bbQ$, then the commensurator of $\grp{u,v}$ contains a group isomorphic to $D_\bbQ$.
  \end{enumerate}
\end{rque}

\begin{de}\label{dfn_class_C}
Let $G$ be a group. 
  We say that $G$ belongs to the class $\mathcal{C}$ if 
\begin{enumerate}
\item  \label{it_IG} $G$ acts transitively on $\cali_G$, and freely on $\cali_G^{(2)}$,
%\item any two distinct involutions of $G$ generate an infinite dihedral group.
%Equivalently, all involutions of $G$ are conjugate and the centralizer of any pair of distinct involutions is trivial
\item \label{it_uv} any pair $(u,v)\in \cali_G^{(2)}$ of distinct involutions is of type $D_\bbZ$ or $D_\bbQ$,
\item \label{it_DQ} the set of pairs $(u,v)\in \cali_G^{(2)}$ of type $D_\bbQ$ is non-empty and $G$ acts transitively on it.
\end{enumerate}
\end{de}
% For example, the affine group $GA(\bbQ)=\bbQ^*\ltimes \bbQ$ is in the class $\calc$ with all pairs of involutions of type $D_\bbQ$:
% the set of all involutions is contained in the subgroup of isometries $\{\pm 1\}\ltimes \bbQ\simeq D_\bbQ$
% and is in one-to-one correspondence with the affine line $\bbQ$.
% The action of $GA(\bbQ)$ on the affine line being sharply 2-transitive, so is its action on the set of involutions.

\begin{rque} \label{rq_C_et_s2t}
    If a group $G$ belongs to $\calc$, and if all pairs of involutions of $G$ are of type $D_\bbQ$, then $G$ acts
    sharply 2-transitively on $\cali_G$.

Conversely, any sharply 2-transitive group $G$ of characteristic 0 belongs to the class $\calc$, with all pairs of involutions of type $D_\bbQ$.
  Indeed, $G$ acts sharply 2-transitively on its set of involutions (as recalled in Subsection \ref{characteristic}). 
It then suffices to show that any pair of involutions $(u,v)\in \cali_G^{(2)}$ is of type $D_\bbQ$.
  % Given a pair of involutions $(u,v)\in \cali_G^{(2)}$  and
  Indeed, given $k\geq 1$,
  $(u,v)$ is thus conjugate to $(u,(vu)^{k-1}v)$, so 
  consider an inner automorphism $\sigma_k=\ad_{a_k}$ sending
    $(u,v)$ to $(u,(vu)^{k-1}v)$. Note that $\sigma_k(\grp{u,v})$ is the unique subgroup of index $k$ of $\grp{u,v}$.
    Moreover, for all $k,l\in \bbN\setminus\{0\}$, 
    $\sigma_{k}\circ\sigma_l$ agrees with $\sigma_{kl}$ in restriction to $\grp{u,v}$.
    Since the pair $(u,v)$ has trivial centralizer, it follows that $a_ka_l=a_{kl}$ so 
    $\sigma_k$ and $\sigma_l$ commute. Define $D_1=\grp{u,v}$ and $D_n=\sigma_{n!}\m(D_1)=\sigma_{n}\m (D_{n-1})$.
    Since $\grp{u,v}$ contains $\sigma_n(\grp{u,v})$ with index $n$,
    $D_n=\sigma_{n!}\m(\grp{u,v})$ contains $D_{n-1}=\sigma_{n!}\m\sigma_{n}(\grp{u,v})$ with index $n$. 
    %Note that $D_{n-1}=\ad_{a_n}(D_n)$. 
    Thus, $D_1\subset D_2\subset \dots \subset D_n$ with $[D_n:D_{n-1}]=n$.
    It easily follows that $\cup_{n\in\mathbb{N}} D_n$ is an increasing union of infinite dihedral groups,
  each one having index $n$ in the next one, and whose union is isomorphic to $D_\bbQ$.
\end{rque}

\begin{rque}\label{rem_cent}
  Assertion 1 implies that the centralizer of each involution is malnormal. Indeed, if $g\notin \Cent_G(i)$, any element $a\in \Cent_G(i)^g\cap\Cent_G(i)$ fixes two distinct involutions hence is trivial. Assertion 2 implies that any two distinct involutions generate an infinite dihedral group, and thus do not commute.
\end{rque}

\begin{rque}Definition \ref{dfn_class_C} is equivalent to the definition of the class $\calc$ that appears in \cite{AT21}. Since we do not use the equivalence of the two definitions in this paper, we leave the verification of the equivalence to the interested reader.
\end{rque}

\subsection{Three types of elements}

The following terminology will be convenient.
\begin{de}\label{dfn_typeh}
Consider a group $G$ and an element $h\in G$ of infinite order. We say that
  \begin{enumerate}
\item $h$ is a \emph{translation} if it is the product of two distinct involutions,
\begin{itemize}
    \item it is a \emph{translation of type $D_\bbQ$} if it is the product of two distinct involutions $h=uv$, with $\grp{u,v}$ of type $D_\bbQ$,
   \item it is a \emph{translation of type $D_\bbZ$} if it is the product of two distinct involutions $h=uv$, with $(u,v)$ of type $D_\bbZ$;
\end{itemize}
  \item $h$ is a \emph{homothety} if $h$ centralizes some involution;
  \item $h$ is \emph{isolated} if $h$ is contained in a malnormal cyclic group $\grp{\hat h}$. We say that $h$ is \emph{maximal isolated}
    if $\grp{h}$ is malnormal.
  \end{enumerate}
\end{de}

\begin{rque}In any group $G$, the two types of translations are mutually exclusive (by the second point of Remark \ref{rem_type}). 
Moreover, it is clear that an isolated element cannot be a translation or a homothety. 
\end{rque}

\begin{rque}
    The set of translations together with the trivial element does \emph{not} form a subgroup of $G$ in general. This is the case for $G=K^*\ltimes K$ when $K$ is a skew field, and in fact, if and only if $G=K^*\ltimes K$ for some near-field $K$, where $K=\cali_G^2$ as an additive group (see \cite[Theorem 7.1]{Kerby}).
\end{rque}

\begin{lemme}
  In a group belonging to $\calc$, the three possibilities for $h$ (translation, homothety, isolated) are mutually exclusive.
\end{lemme}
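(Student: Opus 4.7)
The plan is to establish each of the three pairwise exclusions separately. The two cases involving \emph{isolated} can be handled uniformly: if $h$ is isolated, then $\langle \hat h \rangle$ is a malnormal cyclic subgroup containing $h$, necessarily infinite cyclic since $h$ has infinite order. If moreover $h = uv$ is a translation, then $uhu^{-1} = h^{-1}$, so $u$ normalizes $\langle h\rangle$ and one has $\langle h\rangle \subset \langle \hat h\rangle \cap u\langle \hat h\rangle u^{-1}$; malnormality then forces $u \in \langle \hat h\rangle$, which contradicts the fact that this torsion-free group contains no involution. I would run the same argument with an involution $w$ centralizing $h$ in place of $u$ to rule out $h$ being simultaneously a homothety and isolated.

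The harder case is translation versus homothety, and my plan is to reduce to the malnormality of $\Cent_G(w)$ provided by Remark~\ref{rem_cent}. Assume $h = uv$ and that $w$ is an involution with $wh = hw$. First I would verify that $w \notin \{u, v\}$: indeed both $u$ and $v$ invert $h$ (for instance $uhu = vu = h^{-1}$), which is incompatible with centralizing an element of infinite order. In particular $w$ and $u$ are distinct involutions, hence non-commuting by Remark~\ref{rem_cent}.

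The key idea is then to manufacture a second involution centralizing $h$: set $w' := uwu$, which is an involution, and check by direct computation, using $uhu = h^{-1}$ and $wh^{-1}w = h^{-1}$, that $w' h w'^{-1} = h$. Moreover $w' \neq w$ precisely because $u$ and $w$ do not commute. This yields the critical conclusion $h \in \Cent_G(w) \cap u\Cent_G(w)u^{-1}$. Since $u \notin \Cent_G(w)$ (again by non-commutation) and $\Cent_G(w)$ is malnormal, the intersection is trivial, contradicting $h \neq 1$. I expect the main obstacle to be precisely this last step: producing the right auxiliary involution $w' = uwu$ is what unlocks the malnormality argument, whereas the two other exclusions offer no guidance toward this construction.
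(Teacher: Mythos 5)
Your proof is correct and follows essentially the same route as the paper's: your auxiliary involution $w'=uwu$ is exactly the paper's $j=uiu^{-1}$, and invoking malnormality of $\Cent_G(w)$ (Remark~\ref{rem_cent}) is the same mechanism as the paper's appeal to $2$-sharpness, i.e.\ freeness of the conjugation action on $\cali_G^{(2)}$. The only cosmetic difference is that you conclude $h=1$ directly while the paper concludes $u=v$, and that you write out the two ``isolated'' exclusions that the paper dismisses as clear in the preceding remark.
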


%_{Referer au lemme \ref{lem_prod_invol} a enoncer pour action  2-sharp (pas 2-transitive) ?}
\begin{proof}%_{preuve legerement reformulee}Indeed, consider the commensurator $C=\Comm(\grp{h})$. Then $C=\grp{\hat h}$ for $h$ isolated, $C=D_{u,v}\simeq D_\bbZ$ for a $D_\bbZ$-translation, and $C$ contains a group isomorphic to $D_\bbQ$ for a $D_\bbQ$-translation. Thus 1,2 and 4 are mutually exclusive. If $h$ is a homothety centralizing an involution $i$, then $i\in C$ so $h$ cannot be isolated.
  
  In view of the previous remark, we just have to prove that a 
  homothety $h$ cannot be a translation. %Let $h$ be a homothety. 
  Let $i$ be an involution centralized by $h$.
  Arguing by contradiction, assume that $h=uv$ for two involutions $u\neq v$.
%  We finally check that $h$ cannot be a translation using $2$-sharpness of the action on involutions.
  We will use several times that two distinct involutions cannot commute in a group of class $\calc$. 
We note that $u,v\neq i$: indeed,
  if $u=i$ for instance, then $v$ has to commute with $i$ so $v=i$ and $h=1$ a contradiction.
  % Since no two distinct involutions commute,
  The involution $j=uiu\m$ is therefore distinct from $i$. The conjugation by $u$ exchanges $i$ and $j$
  and so does $v$ since $uv$ commutes with $i$.
  By $2$-sharpness, $u=v$ a contradiction.
\end{proof}

\begin{rque}
If $G$ is in the class $\calc$ then all involutions are conjugate, and
  all translations of type $D_\bbQ$ are also conjugate (because $G$ acts transitively on pairs of involutions of type $D_\bbQ$).
\end{rque}

\begin{de}\label{3-type}
  We say that a group $G$ satisfies the \emph{3-type condition} if its elements are of order $1,2$, or $\infty$, and each element $h\in G$ of infinite order is 
  a translation, a homothety or is isolated.
%  of one of the four types of Definition \ref{dfn_typeh}.
\end{de}

\subsection{The class $\calc'$}
We will use the following subclass $\calc'$ of $\calc$.

\begin{de}[The class $\calc'$]
  We say that a group $G$ belongs to the class $\calc'$ if it lies in the class $\calc$ and satisfies the 3-type condition.
\end{de}

For example, the affine group $\bbQ^*\ltimes\bbQ$ belongs to $\calc'$ with no isolated element and no $D_\bbZ$-translation. 
More generally, if $K$ is a field of characteristic 0, then $K^*\ltimes K$ 
belongs to $\calc'$ if and only if
%does not belong to $\calc'$ if 
$K^*$ contains no primitive $n$-th root of unity with $n>2$. 

The following lemmas give some restrictions for groups in $\calc'$ that will be useful.

\begin{lemme}\label{lem_fini}
If $G$ is a group in which every finite subgroup of $G$ has order at most 2, then every infinite virtually cyclic subgroup of $G$ is isomorphic to $\bbZ$, $\bbZ\times (\bbZ/2\bbZ)$ or $D_\bbZ$.\end{lemme}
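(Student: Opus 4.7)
The plan is to use the standard structure theory of infinite virtually cyclic groups: such a group $V$ admits a unique maximal finite normal subgroup $F$, and the quotient $V/F$ is isomorphic to either $\bbZ$ or $D_\bbZ$. Under our hypothesis $|F|\leq 2$, and I would split into cases according to $|F|$ and the nature of the quotient.

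First, if $F=1$, then $V$ is itself $\bbZ$ or $D_\bbZ$ and there is nothing more to say. Otherwise, $F=\{1,t\}$ with $t$ of order $2$, and since $F$ is normal, the conjugation action of $V$ on $F$ is trivial (as $\Aut(F)$ is trivial), so $t$ is central in $V$.

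If moreover $V/F\simeq\bbZ$, then the central extension $1\to F\to V\to \bbZ\to 1$ splits because $\bbZ$ is free, giving $V\simeq \bbZ\times(\bbZ/2\bbZ)$ as desired. The main step is to rule out the remaining case $V/F\simeq D_\bbZ$. Here I would lift an order-$2$ element $s\in D_\bbZ$ to some $\tilde s\in V$; since $\tilde s^2\in F$, either $\tilde s^2=1$ or $\tilde s^2=t$. In the first case, $\tilde s$ and the central involution $t$ commute and are distinct (as $s\neq 1$), so they generate a subgroup isomorphic to $(\bbZ/2\bbZ)^2$, which is finite of order $4$, contradicting the hypothesis. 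In the second case, $\tilde s$ itself has order $4$, again producing a finite subgroup of order $4$. Either way we obtain a contradiction, so this subcase does not occur and the proof concludes.

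The only step requiring any care is the invocation of the structure theorem for virtually cyclic groups (the dichotomy $V/F\in\{\bbZ,D_\bbZ\}$ with $F$ the maximal finite normal subgroup); this is classical, but one could alternatively derive it directly by taking an infinite cyclic normal subgroup $C\trianglelefteq V$ of finite index, setting $F$ to be the kernel of the action of $V$ on the two ends of $C$, and checking that $F$ is finite with $V/F$ embedding into $\Aut(\text{ends})\ltimes\text{something}$. The rest of the argument is elementary group theory, and I do not expect any genuine obstacle.
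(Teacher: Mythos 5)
Your proof is correct and follows essentially the same route as the paper: both decompose the infinite virtually cyclic subgroup as a finite normal subgroup $N$ of order at most $2$ with quotient $\bbZ$ or $D_\bbZ$, and then rule out the $D_\bbZ$ quotient when $N$ is non-trivial by producing a forbidden subgroup of order $4$. The paper phrases this last step more tersely (the quotient must be torsion-free, hence $\bbZ$), but the underlying argument is the same as your lifting of an involution.
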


\begin{proof}
If $E\subset G$ is virtually cyclic, it can be written as an extension of a finite normal group $N\subset E$,
with $E/N$ isomorphic to $\bbZ$ or $D_\bbZ$. If $N$ is trivial, we are done. Otherwise, $N\simeq\bbZ/2\bbZ$
and since finite subgroups of $G$ have order $\leq 2$, $E/N$ has to be torsion-free, so $E/N\simeq \bbZ$ and $E\simeq \bbZ\times (\bbZ/2\bbZ)$.
\end{proof}

\begin{lemme}\label{lem_fini2}
If $G$ belongs to $\calc'$, then every non-trivial finite subgroup of $G$ has order 2, and every infinite virtually cyclic subgroup of $G$ is isomorphic to $\bbZ$, $\bbZ\times (\bbZ/2\bbZ)$ or $D_\bbZ$.
\end{lemme}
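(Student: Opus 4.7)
The plan is to reduce everything to Lemma \ref{lem_fini} by first showing that every non-trivial finite subgroup of $G$ has order exactly $2$.

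First, I would observe that by the $3$-type condition (Definition \ref{3-type}), every element of $G$ has order $1$, $2$, or $\infty$. Consequently, if $F \subset G$ is any finite subgroup, then every non-trivial element of $F$ is an involution. A standard computation shows that such a group is abelian and an elementary abelian $2$-group: indeed, for $g,h \in F$, $(gh)^2 = 1$ together with $g^2 = h^2 = 1$ forces $gh = h^{-1}g^{-1} = hg$.

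Now I would argue that $|F| \leq 2$. Suppose for contradiction that $F$ contains two distinct involutions $u, v$. By the previous step they commute, so $\grp{u,v}$ is finite (indeed isomorphic to $(\bbZ/2\bbZ)^2$). However, since $G$ belongs to $\calc$, condition \ref{it_uv} of Definition \ref{dfn_class_C} implies that the pair $(u,v)$ is of type $D_\bbZ$ or $D_\bbQ$; in both cases $\grp{u,v}$ is infinite dihedral (see Remark \ref{rem_type}), in particular $u$ and $v$ do not commute. This is the contradiction, so $|F| \leq 2$.

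Finally, the characterization of infinite virtually cyclic subgroups follows immediately by applying Lemma \ref{lem_fini}, whose hypothesis we have just verified. The main (and essentially only) obstacle is the step that rules out finite subgroups of order $>2$, which boils down to recalling that any two distinct involutions in a group of class $\calc$ generate an infinite dihedral subgroup and in particular do not commute.
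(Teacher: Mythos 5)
Your proof is correct and follows the same route as the paper: use the 3-type condition to see that every finite subgroup is an elementary abelian 2-group, rule out two commuting distinct involutions via condition (2) of Definition \ref{dfn_class_C} (as in Remark \ref{rem_cent}), and then invoke Lemma \ref{lem_fini}. No issues.
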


\begin{proof}
If $G$ belongs to $\calc'$, the order of elements of finite order in $G$ is at most 2, so any finite subgroup $F\subset G$ is commutative.
Since distinct involutions of $G$ never commute, $F$ is trivial or isomorphic to $\bbZ/2\bbZ$.
The second part of the lemma then follows from Lemma \ref{lem_fini}.
\end{proof}

\begin{lemme}\label{lemme_exists_homothetie}
Any group $G$ belonging to $\calc'$ contains a subgroup isomorphic to $\bbZ\times(\bbZ/2\bbZ)$.
\end{lemme}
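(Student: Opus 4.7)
The plan is to exhibit a homothety (in the sense of Definition \ref{dfn_typeh}) and then to combine it with the involution it centralizes to get a $\bbZ\times(\bbZ/2\bbZ)$ subgroup. Producing a non-trivial centralizer of an involution is the real content; the rest is bookkeeping using the 3-type condition.

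First I would pick a pair of involutions $(u,v)\in\cali_G^{(2)}$ of type $D_\bbQ$, which exists by item \ref{it_DQ} of Definition \ref{dfn_class_C}, and let $\bar D\cong D_\bbQ$ be a dihedral subgroup containing $\grp{u,v}$. Next I would consider the pair $(u,vuv)$. Since $u$ and $v$ do not commute (they generate an infinite dihedral group, by Remark \ref{rem_type}), we have $vuv\neq u$, so $(u,vuv)\in\cali_G^{(2)}$, and $\grp{u,vuv}\subset\grp{u,v}\subset\bar D$, so this new pair is again of type $D_\bbQ$.

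By item \ref{it_DQ} of Definition \ref{dfn_class_C}, $G$ acts transitively on pairs of type $D_\bbQ$, so there exists $g\in G$ with $gug^{-1}=u$ and $gvg^{-1}=vuv$. I now claim $g$ has infinite order. The element $g$ is non-trivial since otherwise $v=vuv$, i.e.\ $uv=vu$, contradicting non-commutativity. Also $g\neq u$: if $g=u$, then $uvu=vuv$, which after short manipulation (multiplying on the right by $u$) forces $(uv)^3=1$, contradicting the fact that $uv$ has infinite order in $\bar D\cong D_\bbQ$. Finally $g$ cannot be an involution different from $u$: such a $g$ would commute with $u$, contradicting Remark \ref{rem_cent}. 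By the 3-type condition (Definition \ref{3-type}), the only possibility left is that $g$ has infinite order. Since $g$ commutes with the involution $u$, it is a homothety.

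To conclude, I would observe that $\grp{g}\cong\bbZ$ is torsion-free, so the involution $u$ is not in $\grp{g}$; together with $gu=ug$ this gives $\grp{g,u}=\grp{g}\times\grp{u}\cong\bbZ\times(\bbZ/2\bbZ)$, as required. The main obstacle was identifying the right conjugating element; the key idea is to use transitivity on pairs of type $D_\bbQ$ applied to $(u,v)$ and $(u,vuv)$, which produces an element fixing $u$ but not $v$, forcing it to be a homothety.
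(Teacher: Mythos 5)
Your proof is correct and follows essentially the same route as the paper: both arguments use transitivity on pairs of type $D_\bbQ$ sharing the first coordinate $u$ to produce a non-trivial element of $\Cent_G(u)$, and then use that $u$ is the only involution there (plus the order restriction from the 3-type condition) to extract an infinite-order element commuting with $u$. The only cosmetic difference is that the paper argues that $\Cent_G(u)$ is infinite via its simply transitive action on the infinitely many admissible $v'$, whereas you pin down the single conjugator sending $(u,v)$ to $(u,vuv)$ and check its order directly; both are fine.
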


\begin{proof}
By Definition \ref{dfn_class_C} defining class $\calc$, 
$G$ contains a pair of involutions $(u,v)$ of type $D_\bbQ$, and all such pairs are in the same $G$-orbit. 
Moreover, there are infinitely pairs $(u,v')$ of type $D_\bbQ$ (take for $v'$
any involution in $\grp{u,v}\setminus\{u\}$). Since the centralizer $Z(u)$ of $u$ acts transitively on all such $v'$,
it is infinite. Since $u$ is the only involution in $Z(u)$, it contains an element of infinite order which proves the lemma.
\end{proof}

\subsection{Increasing unions}

Recall from Definition \ref{dfn_type} that a pair of involutions $(u,v)$ of type $D_\bbZ$ is maximal if $\grp{u,v}$ is quasi-malnormal in $G$. 
\begin{de}
    Consider an embedding of groups $G\subset G'$, with $G,G'$ in $\calc$.  
  \begin{itemize}
  \item We say that the embedding \emph{preserves maximal pairs} if for each maximal pair $(u,v)\in \cali_{G}^{(2)}$ of
    type $D_\bbZ$, either the pair $(u,v)$ is still a maximal pair of type
    $D_\bbZ$ in $G'$, or $(u,v)$ is of type $D_\bbQ$ in $G'$.

  \item We say that the embedding \emph{preserves maximal isolated elements} if for each malnormal cyclic group $\grp{h}$, either $\grp{h}$ is still
  malnormal in $G'$, or $h$ is a translation or a homothety in $G'$.
\end{itemize}
We say that the embedding \emph{preserves maximality} if it satisfies both conditions.
\end{de}

\begin{lemme}\label{lem_chaine}
  Consider $G=\cup_{n\in\bbN} G_n$ an increasing union of groups in $\calc'$. Assume that the embeddings $G_n\subset G_{n+1}$ preserve maximality. Then $G$ is in the class $\calc'$ and for each $n\in \bbN$, the embedding $G_n\subset G$ preserves maximality.
\end{lemme}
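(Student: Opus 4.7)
The plan is to verify in sequence: (a) $G\in\calc$; (b) $G$ satisfies the 3-type condition; (c) each embedding $G_n\subset G$ preserves maximality. The unifying principle will be that any finite configuration---a pair of involutions, a candidate conjugator, an element violating (quasi-)malnormality---already lies in some $G_m$, so the relevant property can be checked there and then propagated to $G$ (for positive statements) or used to derive a contradiction (for negative ones).

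For (a), Condition~(\ref{it_IG}) of Definition~\ref{dfn_class_C} will be immediate by pushing everything into a common $G_m$. For Condition~(\ref{it_uv}), I would take $(u,v)\in\cali_G^{(2)}$ with $u,v\in G_n$ and argue by dichotomy: either $(u,v)$ becomes of type $D_\bbQ$ at some stage $G_m\supset G_n$, in which case the witnessing $D_\bbQ$-subgroup persists in $G$; or $(u,v)$ remains of type $D_\bbZ$ at every stage. In the second case I would pass to a generating maximal pair $(u',v')$ of the maximal dihedral $D_{u',v'}\subset G_n$ containing $\langle u,v\rangle$, and iterate the preservation of maximal pairs: by the alternative, $D_{u',v'}$ stays quasi-malnormal in every $G_m$, and a limit argument (any offending $g\in G$ lies in some $G_m$) upgrades this to quasi-malnormality in $G$. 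Condition~(\ref{it_DQ}) will follow similarly: a $D_\bbQ$-pair in $G_0$ persists in $G$ for non-emptiness, and two $D_\bbQ$-pairs in $G$ must become $D_\bbQ$ at a common stage by the dichotomy, so their $G_m$-conjugacy lifts.

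For (b), take $h\in G$ of infinite order and fix $n$ with $h\in G_n$, so that $h$ is classified as translation, homothety, or isolated in $G_n$ since $G_n\in\calc'$. Translations and homotheties transfer directly to $G$ because their witnesses persist. In the isolated case $h\in\langle\hat h\rangle$ with $\langle\hat h\rangle$ malnormal in $G_n$, I would iterate the preservation of maximal isolated elements: either $\hat h$ stays maximal isolated in every $G_m$, so $\langle\hat h\rangle$ is malnormal in $G$ by the limit argument; or $\hat h$ becomes a homothety in some $G_m$, whence $h=\hat h^k$ inherits the centralizing involution; or $\hat h$ becomes a translation $\hat h=uv$ in some $G_m$. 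The key structural fact is that in any infinite dihedral group---and in any group isomorphic to $D_\bbQ$---every non-zero element of the translation subgroup is a product of two distinct involutions; in particular, since $h=\hat h^k$ lies in the translation subgroup of $\langle u,v\rangle$ and is non-trivial, $h$ is a product of distinct involutions in $\langle u,v\rangle\subset G_m$, so $h$ is a translation in $G_m$ and hence in $G$. Claim (c) will follow from essentially the same iterative/limit arguments applied to maximal pairs and to maximal isolated elements.

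The main obstacle will be the isolated case of (b): a priori $h$ might be isolated in every $G_m$ while the malnormal cyclic $\langle\hat h_m\rangle$ containing it strictly grows with $m$, so that its direct limit fails to be cyclic and isolation could fail in $G$. The power-of-translation fact above rules this out: as long as $h$ remains isolated, $\hat h_m$ can neither become a translation (that would force $h$ to be one) nor a homothety (same), so $\hat h_m$ must stay maximal isolated. Moreover, any two malnormal cyclic subgroups of $G_{m+1}$ sharing a non-trivial element must coincide---an outside generator would commute with the shared element and thereby witness a non-trivial intersection with its own conjugate---so the sequence $\langle\hat h_m\rangle$ is in fact eventually constant.
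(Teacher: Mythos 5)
Your proposal is correct and follows essentially the same route as the paper's proof: every condition is checked inside a common finite stage $G_m$, the maximality-preservation hypothesis forces the witnessing subgroups $D_{u,v}$ and $\grp{\hat h}$ to stabilize along the chain, and a limit argument upgrades (quasi-)malnormality to $G$. You in fact supply the details --- that a non-trivial power of a translation in a dihedral group is again a translation, and that two malnormal cyclic subgroups sharing a non-trivial element coincide --- which the paper compresses into ``the argument is similar to the previous one''.
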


\begin{proof}
  Let's first check that $G$ is in the class $\calc$.
  Assertion (1) of Definition \ref{dfn_class_C} is clear. If $(u,v)$ is of type $D_\bbQ$ in $G_n$, then it is clearly of type $D_\bbQ$ in $G$.
  If $(u,v)\in \cali_{G_{n_0}}^{(2)}$ is of type $D_\bbZ$ in every $G_{n}$ for all $n\geq n_0$, 
  then since embeddings preserve maximality,  
  $D_{u,v}$ does not depend on the group $G_n$.
  It follows that $D_{u,v}$ is quasi-malnormal in every $G_n$ hence in $G$. This proves Assertion (2) and that the embedding $G_n\subset G$ preserves maximal pairs. 
  Since any pair of type $D_{\bbQ}$ in $G$ is of type $D_{\bbQ}$ in any $G_n$ containing it, Assertion (3) is clear, so $G$ belongs to $\calc$.

  To prove that $G$ is in $\calc'$, the only non-obvious point to check is that if $h\in G_{n_0}$ is isolated in $G_n$ for every $n\geq n_0$ then it is still isolated in $G$. The argument is similar to the previous one 
  using that inclusions preserve maximal isolated elements.
\end{proof}

\subsection{HNN extensions and amalgams}
The following proposition unifies several constructions in \cite{AT21} and \cite{RT}.
% show that the class $\mathcal{C}$ is stable under certain HNN extensions. The first one is Proposition 4.1 in \cite{AT21} (see also Proposition 1.4 in \cite{RT}).

\begin{prop}\label{prop_gen}
  Let $G$ be a group in the class $\calc$. Consider an HNN extension $G_1=G*_C$ or an amalgam $G_1=G*_C H$
  and denote by $G_1\actson T$ the corresponding Bass-Serre tree.
  %be the fundamental group of a graph of groups
  % with a vertex $v_0$ whose vertex group is $G_{v_0}=G$. Assume that
  Assume that the following properties hold for some $k\geq 0$:
  \begin{enumerate}
  \item almost $k$-acylindricity: the stabilizer of any segment of length $>k$ in $T$ has order $\leq 2$;
  \item \label{it_H} in the case of an amalgam, $H$ has at most one involution (necessarily central in $H$), and this involution lies in the subgroup $C$.
  \end{enumerate}
  Then $G_1$ is in the class $\calc$. 
  
  If moreover $G$ and $H$ satisfy the 3-type condition (see Definition \ref{3-type}), then $G_1$ belongs to $\calc'$ and the embedding $G\subset G_1$ preserves maximality.
\end{prop}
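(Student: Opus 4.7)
The plan is to analyze the action of $G_1$ on its Bass-Serre tree $T$, whose vertex stabilizers are (conjugates of) $G$ and, in the amalgam case, $H$, and whose edge stabilizers are (conjugates of) $C$. First I would observe that every torsion element of $G_1$ is elliptic on $T$, and in particular every involution fixes a vertex hence is conjugate into a vertex group; by assumption (2) the only involutions sitting in $H$ are those in $C \subset G$, so all involutions of $G_1$ are in fact conjugate to involutions of $G$. Combined with transitivity of $G$ on $\cali_G$, this yields the first part of Axiom (1) of Definition \ref{dfn_class_C}.

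The heart of the proof is the classification of pairs $(u,v) \in \cali_{G_1}^{(2)}$ into the two types $D_\bbZ$ and $D_\bbQ$. I would split into two cases according to whether $\grp{u,v}$ has a global fixed vertex in $T$. If it does (Case A), this vertex is stabilized by a conjugate of $G$ (since $H$ has too few involutions by (2)), and the pair inherits its type from $G \in \calc$. Pairs of type $D_\bbQ$ in $G$ clearly remain so in $G_1$; for a pair of type $D_\bbZ$ in $G$ with containing dihedral group $D_{u,v}$, one has to verify that either $D_{u,v}$ remains quasi-malnormal in $G_1$ (so $(u,v)$ is still of type $D_\bbZ$) or that $(u,v)$ is absorbed into a $D_\bbQ$ in $G_1$. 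If $\grp{u,v}$ has no global fixed vertex (Case B), then $uv$ is hyperbolic on $T$ with some positive translation length $\ell$; since type $D_\bbQ$ requires arbitrary roots of $uv$, which would have translation lengths $\ell/n \to 0$ forcing ellipticity for large $n$, the pair must be of type $D_\bbZ$, and the containing dihedral group is obtained as the setwise stabilizer of the axis of $uv$, with quasi-malnormality following from almost-$k$-acylindricity.

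With the pair classification at hand, freeness of the action on $\cali_{G_1}^{(2)}$ is verified by arguing that any $g$ centralizing both $u$ and $v$ preserves their common fixed vertex (Case A) or the axis of $uv$ (Case B), which by almost-$k$-acylindricity combined with $G \in \calc$ forces $g=1$. Transitivity on $D_\bbQ$-pairs then reduces to transitivity in $G$, since every such pair lives in a conjugate of $G$ by the classification. For the second half of the statement, the 3-type condition transfers element by element: elements of infinite order that are elliptic on $T$ reduce to $G$ or $H$ where 3-type is assumed, while hyperbolic elements are handled by an acylindricity argument along their axis, yielding either an isolated element or a translation of type $D_\bbZ$. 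Preservation of maximality follows the same dichotomy: a maximal $D_\bbZ$-pair $(u,v)$ of $G$ either keeps $D_{u,v}$ quasi-malnormal in $G_1$ or is absorbed into a $D_\bbQ$-pair, and a maximal isolated element either stays malnormal in $G_1$ or becomes a translation or a homothety.

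I expect the main obstacle to be the Case A analysis, specifically showing that a $D_\bbZ$-pair in $G$ which loses quasi-malnormality of its $D_{u,v}$ in $G_1$ becomes in fact of type $D_\bbQ$ in $G_1$, rather than landing in some intermediate dihedral-extending sequence that fails to exhaust $D_\bbQ$. To produce the full $D_\bbQ$-structure one would have to build a nested chain $D_{u,v} \subset D_2 \subset \dots$ with $[D_n : D_{n-1}] = n$ as in Remark \ref{rq_C_et_s2t}, using the new conjugators available in $G_1$ together with the almost-$k$-acylindricity hypothesis to ensure the constructed dihedral extensions remain well-defined and compatible.
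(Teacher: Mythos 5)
Your overall skeleton (Bass--Serre tree, elliptic/hyperbolic dichotomy for pairs of involutions and for single elements, acylindricity giving quasi-malnormality of axis stabilizers) is exactly the paper's, and your Case B is essentially the argument given there. But the step you flag as the ``main obstacle'' is a genuine gap, and the resolution you sketch for it is the wrong one. For an elliptic pair $(u,v)$ whose $D_{u,v}$ is of type $D_\bbZ$ in $G$, you propose to show that if quasi-malnormality is lost in $G_1$ then $(u,v)$ becomes of type $D_\bbQ$ by \emph{building} a nested chain $D_{u,v}\subset D_2\subset\cdots$ out of ``new conjugators available in $G_1$.'' No such chain can be built in general: the stable letter of an acylindrical HNN extension need not commensurate $\grp{u,v}$ at all, and nothing in the hypotheses produces the required index-$n$ overgroups. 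The missing idea is the commensurator characterization of type $D_\bbZ$ (Remark \ref{rem_type}(\ref{it_commensurator})) combined with the observation that $M=\Comm_{G_1}(\grp{u,v})$ preserves the union of the fixed-point sets of the finite-index subgroups of $\grp{u,v}$, a \emph{bounded} subtree by almost $k$-acylindricity; hence $M$ is elliptic and, after conjugation, $\grp{u,v}\subset M\subset G$ with $M=\Comm_G(\grp{u,v})$. The type of $(u,v)$ in $G_1$ is therefore read off from the type in $G$ of a $G_1$-conjugate of the pair lying in $G$: it is $D_\bbQ$ only if that conjugate pair was already of type $D_\bbQ$ in $G$ (as happens in Corollary \ref{cor_HNN_paire}), and otherwise the commensurator stays infinite dihedral and the pair remains maximal of type $D_\bbZ$. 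No new $D_\bbQ$-structure is ever constructed.

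The same missing tool reappears in your treatment of the 3-type condition for elliptic elements: saying that such elements ``reduce to $G$ or $H$'' is not enough, because you must show an isolated element of $G$ stays isolated (i.e.\ $\grp{h}$ stays malnormal) in $G_1$, and again the argument is that $\Comm_{G_1}(\grp{h})$ preserves the bounded subtree of points fixed by some power of $h$, hence is elliptic and equal to the commensurator inside a vertex group. Two smaller points: your freeness argument in the hyperbolic case needs the preliminary claim that the \emph{pointwise} stabilizer of $l_{u,v}$ is trivial (not merely of order $\le 2$), which uses that an involution fixing the line would commute with $u$, contradicting $G\in\calc$; and transitivity on involutions uses hypothesis (\ref{it_H}) to push involutions of $H$ into $C\subset G$, which you do note.
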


\begin{ex}\label{ex_produit_libre}
The proposition implies that if $G$ lies in $\calc$ or $\calc'$, then so does $G*\bbZ$.
\end{ex}

\begin{rque}\label{rem_H}If $H$ has a unique involution, then all elements of infinite order of $H$ are homotheties, and $H$ satisfies the 3-type condition if and only if its elements are of order $1,2$ or $\infty$.
\end{rque}

\begin{proof}
  Before starting the proof, given two involutions $u,v$, we say that $\grp{u,v}$ is hyperbolic if it does not fix a point in $T$ (otherwise it is elliptic).
  In this case, we denote by $l_{u,v}$ the unique line of $T$ invariant under $\grp{u,v}$. 
  By acylindricity, the pointwise stabilizer of $l_{u,v}$ has order $\leq 2$, we claim that it is trivial.
  Indeed, if $g\in G$ is an involution fixing $l_{u,v}$ pointwise, it is unique and therefore has to commute with $\grp{u,v}$.
  Since $g$ has a common fixed point with $u$, the pair $(g,u)$ is conjugate in $G$ (it cannot be conjugate in $H$ because $g\neq u$).
  Since no pair of involutions of $G$ commute, we get a contradiction which proves the claim.

  We now check that $G_1$ satisfies Assertion (1) of Definition \ref{dfn_class_C}. The fact that $G_1$ acts transitively on the set of its involutions is clear: any involution of $G_1$ is elliptic in $T$, so it is conjugate in $G$ or $H$,
  hence in $G$ by assumption (\ref{it_H}). Then, proving that the action on $\cali_{G_1}^{(2)}$ is free amounts to checking that the centralizer $Z$ of a pair $(u,v)\in  \cali_{G_1}^{(2)}$ is trivial. If $\grp{u,v}$ is hyperbolic, then any element $g\in Z$ fixes $l_{u,v}$ pointwise so $g=1$ by the initial claim.
  If $\grp{u,v}$ is elliptic, then the pair $(u,v)$ is conjugate to a pair of involutions of $G$ (it cannot be conjugate in $H$ by (\ref{it_H})). The set $F\subset T$ of fixed points of $\grp{u,v}$ is a subtree of diameter $\leq k$.
  In the case of an amalgam, it is reduced to a point fixed by a conjugate of $G$ by Assumption (\ref{it_H}).
  Since $Z$ preserves the bounded tree $F$, it fixes a point in $F$. Thus $Z,u,v$ are contained in a common conjugate of $G$, so $Z$ is trivial since $G$ belongs to $\calc$.
  This proves that the action of $G_1$ on $\cali_{G_1}^{(2)}$ is free.
  
  Then we prove that $G_1$ satisfies Assertion (2) of Definition \ref{dfn_class_C}, namely that any pair of distinct involutions is of type $D_{\bbZ}$ or $D_{\bbQ}$. If $\grp{u,v}$ is hyperbolic then the pair $(u,v)$ is of type $D_\bbZ$: the global stabilizer $D_{u,v}$ of  $l_{u,v}$ of is an infinite dihedral group (it acts faithfully on $l_{u,v}$ by our initial claim),
  and it is quasi-malnormal: if $gl_{u,v}\neq l_{u,v}$, then $D_{u,v}\cap D_{u,v}^g$ cannot contain any element of infinite order, so $D_{u,v}\cap D_{u,v}^g$ has order at most $2$. If $\grp{u,v}$ is elliptic, then its commensurator $M=\Comm_{G_1}(\grp{u,v})$ is elliptic because it preserves the union of fixed points of finite index subgroups of $\grp{u,v}$, a subtree of diameter $\leq k$.
  Up to conjugacy, we may thus assume that $\grp{u,v}\subset M\subset G$, so $M=\Comm_{G}(\grp{u,v})$.
  If $(u,v)$ has type $D_\bbQ$ in $G$, it also has type $D_\bbQ$ in $G_1$. If it has type $D_\bbZ$ in $G$, then $\Comm_G(\grp{u,v})$ is infinite dihedral, and so is $\Comm_{G_1}(\grp{u,v})$
  so $(u,v)$ has type $D_\bbZ$ in $G_1$. Hence Assertion (2) of Definition \ref{dfn_class_C} is proved. Note that we also proved that the embedding $G\subset G_1$ preserves maximal pairs.

    Last, since any pair of type $D_\bbQ$ has to be elliptic in $T$, it is conjugate to a pair in $G$, so $G_1$ acts transitively
  on the set of pairs of type $D_\bbQ$. This shows that $G_1$ satisfies Assertion (3) of Definition \ref{dfn_class_C} and concludes the proof that $G_1$ belongs to $\calc$.

  We now assume that $G$ and $H$ satisfy the 3-type condition and prove that $G_1$ satisfies the 3-type condition, hence belongs to the class $\calc'$. Clearly, all elements of $G_1$ have order $1,2$ or $\infty$. Let $h\in G_1$ be an element of infinite order.

  If $h$ is hyperbolic in $T$, let $l\subset T$ be its axis,
  and $A$ be the global stabilizer of $l$. If the pointwise stabilizer of $l$ is non-trivial,
  it is isomorphic to $\bbZ/2\bbZ$ by acylindricity and $h$ is a homothety.
  If the pointwise stabilizer of $l$ is trivial, then $A$ acts faithfully on $l$ so $A$ is either cyclic
  or infinite dihedral. If dihedral, then $h$ is a translation.
  If $A$ is cyclic, then it is malnormal because if $gAg\m\cap A\neq 1$, then $g$ must preserve the line $l$,
  so $g\in A$. Thus $h$ is isolated with $\grp{\hat h}=A$.

  Now assume that $h$ is elliptic in $T$. 
  Consider the subtree $F$ of $T$ consisting of points fixed
  by some power of $h$. $F$ has diameter at most $k$ and is invariant under the group $M=\Comm_{G_1}(\grp{h})$
  so $M$ is elliptic. Thus we may assume that $\grp{h}\subset M\subset H$ or $\grp{h}\subset M\subset G$.
     
  If $\grp{h}\subset M\subset H$ and $H$ centralizes an involution, then $h$ is a homothety.
  If $\grp{h}\subset M\subset H$ and $H$ is torsion-free, then $h$ cannot be a translation or a homothety in $H$
  so $h$ is isolated in $H$, so $M\simeq \bbZ$ and $h$ is isolated in $G_1$.

    If $\grp{h}\subset M\subset G$, and if $h$ is a translation or a homothety in $G$, then this is also the case in $G_1$.
    If $h$ is isolated in $G$, then $M\simeq \bbZ$ and $h$ is isolated in $G_1$.
%   So we may assume that $h$ is isolated in $G$ 
%   \Vcomment{Raccourcir: il suffit de dire que $h$ isolé ssi son commensurateur est cyclique, non ?}
%   and consider $\hat h\in G$ such that $\grp{\hat h}$ is malnormal in $G$.
%   Then $C=\grp{\hat h }$ and it follows that $\grp{\hat h}$ is malnormal in $G_1$: if
%     $\grp{\hat h}^g\cap\grp{\hat h}\neq \{1\}$ then $g$ commensurates $\grp{h}$ and $g\in C$.
% This proves that $\grp{h}$ is malnormal in $G_1$ so $h$ is isolated in $G_1$, which concludes the proof that $G_1$ belongs to the class $\calc'$. Note that we also proved that $G\subset G_1$ preserves maximal isolated elements, hence preserves maximality.
\end{proof}

The following corollary allows to turn a pair of involutions of type $D_\bbZ$ into type $D_\bbQ$.

\begin{co}[{\cite[Prop. 4.1]{AT21}, \cite[Prop 1.4]{RT}}]\label{cor_HNN_paire}
  Let $G$ be a group in the class $\mathcal{C}'$.
  Let $(u_0,v_0)\in  \cali_G^{(2)}$ be a pair of type $D_\bbQ$ and $(u,v)\in \cali_G^{(2)}$ be any maximal pair of type $D_\bbZ$. Then the following HNN extension belongs to $\calc'$:\[G_1=\langle G,t \ \vert \ tut^{-1}=u_0, \ tvt^{-1}=v_0\rangle.\]Moreover, the inclusion $G\subset G_1$ preserves maximality.
\end{co}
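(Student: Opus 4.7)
The plan is to apply Proposition \ref{prop_gen} to the HNN extension $G_1=G*_C$, where the associated subgroup $C=\grp{u,v}$ is identified via $t$ with $C'=\grp{u_0,v_0}$. Since $G\in\calc'$ satisfies the $3$-type condition, and since the HNN case involves no second vertex group $H$, the only hypothesis of Proposition \ref{prop_gen} that needs to be checked is almost $k$-acylindricity of the action on the Bass-Serre tree $T$ for some $k\ge 0$. I claim that $k=2$ suffices.

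The central computation is the following trichotomy for the intersection of two distinct edge stabilizers $E,E'$ meeting at a common vertex $w$ of $T$. After identifying $\Stab(w)\simeq G$, each of $E,E'$ is a $G$-conjugate of either $C$ or $C'$. If both are conjugates of $C$, then since $(u,v)$ is a \emph{maximal} pair of type $D_\bbZ$, the subgroup $C=D_{u,v}$ is quasi-malnormal in $G$, so $|E\cap E'|\le 2$. If one is a conjugate of $C$ and the other of $C'$, then the generating pairs are of type $D_\bbZ$ and $D_\bbQ$ respectively (types being invariant under $G$-conjugation), so $|E\cap E'|\le 2$ by Remark \ref{rem_type}(\ref{it_malnormal}). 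The only remaining, a priori unbounded, case is when both $E$ and $E'$ are conjugates of $C'$.

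The crucial combinatorial observation is that each edge of $T$ carries a consistent HNN orientation coming from the stable letter $t$: its stabilizer, viewed inside each of the two endpoint vertex groups, is a conjugate of $C$ at one end and a conjugate of $C'$ at the other. Consequently, along any segment $w_0-w_1-\cdots-w_n$ with edges $e_1,\ldots,e_n$, if $e_{i+1}$ is a conjugate of $C'$ at $w_i$, then it is a conjugate of $C$ at $w_{i+1}$. In particular, the \emph{bad} configuration (both edges incident at a given interior vertex being conjugates of $C'$) cannot occur simultaneously at two consecutive interior vertices. Hence in a segment of length $\ge 3$, at least one of the $\ge 2$ interior vertices falls into one of the first two cases of the trichotomy, forcing the pointwise stabilizer of the whole segment to have order $\le 2$. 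This establishes almost $2$-acylindricity, and Proposition \ref{prop_gen} then yields both that $G_1\in\calc'$ and that the inclusion $G\subset G_1$ preserves maximality.

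The step I expect to require the most care is the orientation bookkeeping: identifying precisely which edge-stabilizer is a conjugate of $C$ and which is a conjugate of $C'$ from each endpoint's viewpoint, so that the ``bad consecutive'' pattern is genuinely ruled out. Once that is in place, the two relevant intersection bounds are immediate from the maximality of $(u,v)$ in the $D_\bbZ$ case and from Remark \ref{rem_type}(\ref{it_malnormal}) in the mixed case.
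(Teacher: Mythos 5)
Your proof is correct and takes essentially the same route as the paper's: reduce to Proposition \ref{prop_gen} and verify almost 2-acylindricity of the Bass--Serre tree using quasi-malnormality of $\grp{u,v}$ (maximality of the $D_\bbZ$ pair) together with the bound $|\grp{u_0,v_0}\cap \grp{u,v}^g|\le 2$ from Remark \ref{rem_type}. The paper compresses this verification into one sentence; your orientation bookkeeping ruling out two consecutive ``both conjugates of $C'$'' vertices is precisely the detail it leaves implicit.
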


\begin{proof}
  In view of Proposition \ref{prop_gen}, we just have to check that the action on the Bass-Serre tree is almost $2$-acylindrical.
  This easily follows from the fact that $\grp{u,v}$ is quasi-malnormal in $G$, and that $|\grp{u_0,v_0}\cap \grp{u,v}^g|\leq 2$
  for all $g\in G$.
\end{proof}

The following corollary allows to turn an isolated element into a translation.

\begin{co}[see also Proposition 4.2 in \cite{AT21}]\label{cor_HNN_isolated}
  Let $G$ be a group in $\calc'$ and let $g,h$ be two elements of $G$ of infinite order. Suppose that $g$ is a translation or a homothety in $G$ and that 
  $h$ is a maximal isolated element. 
  Then the following HNN extension belongs to $\calc'$: \[G_1=\langle G,t \ \vert \ tgt^{-1}=h\rangle.\]Moreover, the inclusion $G\subset G_1$ preserves maximality.
\end{co}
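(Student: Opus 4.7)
The plan is to apply Proposition~\ref{prop_gen} to the HNN extension $G_1=\langle G,t\mid tgt^{-1}=h\rangle$, whose associated subgroups are $\langle g\rangle$ and $\langle h\rangle$. Condition~(\ref{it_H}) of that proposition is vacuous for HNN extensions, and $G$ already satisfies the 3-type condition by hypothesis, so the only substantive task is to verify almost $k$-acylindricity of the action of $G_1$ on its Bass-Serre tree $T$ for some $k\geq 0$.

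The two structural ingredients are: (a) $\langle h\rangle$ is malnormal in $G$ since $h$ is maximal isolated, so in particular $N_G(\langle h\rangle)=\langle h\rangle$; and (b) no non-trivial power of $g$ is conjugate in $G$ to a power of $h$, because in a group of class $\calc'$ the three types translation, homothety and isolated are mutually exclusive and are conjugacy invariants. From (a) and (b) I would first prove the intersection lemma $\langle g\rangle\cap c\langle h\rangle c^{-1}=\{1\}$ for every $c\in G$. Indeed, a non-trivial element $g^k=ch^mc^{-1}$ in the intersection would force $\langle g^k\rangle\subset c\langle h\rangle c^{-1}$; since $g$ normalises $\langle g^k\rangle$, malnormality of $c\langle h\rangle c^{-1}$ then forces $g\in c\langle h\rangle c^{-1}$, so $g$ would be conjugate to a power of $h$, contradicting (b).

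Next I would analyse stabilisers of segments in $T$. At each vertex $v=\gamma G$, the incident edges split into two classes according to the HNN orientation: one class has stabilisers conjugate inside $\gamma G\gamma^{-1}$ to $\langle g\rangle$, the other to $\langle h\rangle$. The intersection lemma gives that two incident edges of different classes have trivial common stabiliser. Similarly, malnormality of $\langle h\rangle$ gives that two distinct incident edges of the $\langle h\rangle$-class have trivial common stabiliser. So any non-trivially stabilised segment of $T$ must, at every interior vertex, have both adjacent edges of the $\langle g\rangle$-class. A direct local analysis then shows that an element $y\in G$ which fixes such a configuration at $v_0=G$ identifies, in the local coordinates of the adjacent vertex $v_1$, with an element of $\langle h\rangle$; applying the intersection lemma and malnormality of $\langle h\rangle$ at $v_1$ shows that no further edge beyond $v_1$ is fixed by $y$. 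Hence the fixed subtree of any infinite-order element of $G_1$ is a star of diameter at most~$2$.

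The main obstacle is this last diameter bound in the case where $g$ is a $D_\bbQ$-translation: then $N_G(\langle g\rangle)/\langle g\rangle$ is infinite and the star at $v_0$ may have infinitely many rays, so one might worry that the fixed subtree extends to a longer path. The key point is that it remains a \emph{star}: extension past the far vertex is blocked by the intersection lemma, which prevents $y$ from being conjugate to a power of $h$. With almost $2$-acylindricity in hand, Proposition~\ref{prop_gen} delivers that $G_1$ belongs to $\calc'$ and that the embedding $G\subset G_1$ preserves maximality.
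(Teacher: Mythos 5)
Your proposal is correct and follows essentially the same route as the paper: the paper's (much terser) proof likewise reduces everything to Proposition \ref{prop_gen} by establishing almost $2$-acylindricity of the Bass-Serre tree from exactly your two ingredients, namely malnormality of $\grp{h}$ and the triviality of $\grp{h}\cap\grp{g}^{a}$ for all $a\in G$. The only cosmetic difference is that the paper justifies the intersection fact by noting that a power of $h$ lying in a conjugate of $\grp{g}$ would sit inside a copy of $D_\bbZ$ or $\bbZ\times(\bbZ/2\bbZ)$, contradicting malnormality of $\grp{h}$, whereas you invoke the conjugacy-invariance and mutual exclusivity of the three types; both work.
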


%\begin{rque}This result is also true if we assume that $g$ is a homothety, but we do not need it.\end{rque}

\begin{proof}
  Acylindricity follows from malnormality of $\grp{h}$ in $G$, and
  from the fact that $\grp{h}\cap \grp{g}^a=\{1\}$ for all $a\in G$.
  This last fact holds true because otherwise,
  some power of $h$ would be conjugate in a group isomorphic to $D_\bbZ$ or to $\bbZ\times (\bbZ/2\bbZ)$
  contradicting the fact that $\grp{h}$ is malnormal in $G$.
\end{proof}

The following result allows to make two given homotheties conjugate.

\begin{co}\label{cor_HNN_homothetie}
  Let $G$ be a group in the class $\calc'$.
  Let $h_1,h_2\in G$ be two homotheties % elements of the same order that centralize an invoand commuting with the same involution $i$.
  centralizing respectively the involutions $u_1$ and $u_2$. Then the following HNN extension belongs to $\calc'$: \[G_1=\langle G,t \ \vert \ tu_2t^{-1}=u_1, \ th_2t^{-1}=h_1\rangle.\] Moreover, the inclusion $G\subset G_1$ preserves maximality.
\end{co}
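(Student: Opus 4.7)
The plan is to invoke Proposition \ref{prop_gen} for the HNN extension
\[G_1 = G *_\phi\]
with associated subgroups $C_i := \langle u_i, h_i \rangle$. Each $C_i$ is isomorphic to $\bbZ \times (\bbZ/2\bbZ)$ because $h_i$ has infinite order and commutes with the involution $u_i$, so the gluing isomorphism $\phi : C_2 \to C_1$ sending $u_2 \mapsto u_1$ and $h_2 \mapsto h_1$ is well-defined. Since $G \in \calc'$ already satisfies the 3-type condition, the only substantive step is to verify that the action on the Bass-Serre tree $T$ is almost-$k$-acylindrical for some $k$.

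The structural ingredient I would rely on is that $C_i$ is contained in $Z_G(u_i)$, which is malnormal in $G$ by Remark \ref{rem_cent}. I would first derive the following companion fact: for any two distinct involutions $u, v$ of $G$, $Z_G(u) \cap Z_G(v) = \{1\}$. Indeed, any $a$ in the intersection lies in $Z_G(u) \cap v Z_G(u) v^{-1}$, and since distinct involutions in $\calc$ do not commute, $v \notin Z_G(u)$, whence malnormality forces $a = 1$. In particular, $C_i \cap g C_i g^{-1} = \{1\}$ whenever $g u_i g^{-1} \neq u_i$. Now suppose $g \in G_1\setminus\{1\}$ fixes a segment $\sigma$ in $T$ with edge stabilizers $G_{e_1},\ldots,G_{e_n}$, each a conjugate of some $C_i$ with unique involution $u^{(j)}$. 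At each interior vertex $v$ of $\sigma$, the two adjacent edge stabilizers sit inside $G_v$ (a conjugate of $G$); if $u^{(j)} \neq u^{(j+1)}$ then the preceding fact applied inside $G_v$ yields $g = 1$. Hence all $u^{(j)}$ coincide with a single involution $u$ of $G_1$, which is the unique involution of every $G_{e_j}$.

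It then remains to bound the length of such common-involution chains. Here I would use the 3-type condition inside $Z_G(u_i)$: every element of infinite order in $Z_G(u_i)$ is a homothety centralizing $u_i$, and the mutual exclusivity of the three types (translation, homothety, isolated) constrains how $g$ can simultaneously be a power of many different homotheties $h^{(j)}$ with $G_{e_j} = \langle u, h^{(j)}\rangle$. A careful analysis, exploiting that any element $c \in Z_G(u_i) \setminus C_i$ for which $C_i \cap c C_i c^{-1}$ contains an element of infinite order must conjugate some nonzero power of $h_i$ to another power, yields a finite $k$ such that no element of $G_1$ of order $>2$ can fix a segment of length $>k$. Proposition \ref{prop_gen} then gives both $G_1 \in \calc'$ and preservation of maximality by the embedding $G \subset G_1$. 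The main obstacle is this last rigidity step: extracting a uniform finite bound on chain length from the 3-type condition within the a priori non-commutative centralizer $Z_G(u_i)$, controlling how powers of $h_i$ can be conjugate to each other by elements of $Z_G(u_i)\setminus C_i$.
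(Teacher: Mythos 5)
Your reduction to ``all edge stabilizers along the segment contain a common involution $u$'' is correct and is essentially Remark \ref{rem_cent}, but the step you defer at the end --- extracting a uniform $k$ so that no element of order $>2$ fixes a segment of length $>k$ --- is not just missing, it is false in general: the Bass--Serre tree of this HNN extension is \emph{not} almost $k$-acylindrical for any $k$ (the paper's proof opens by saying exactly this). The problem is that the edge group $C_1=\grp{u_1,h_1}$ can sit inside a much larger centralizer $Z=\Cent_G(u_1)$, and nothing in the class $\calc'$ prevents $Z$ from containing an element $h'$ of infinite order commuting with $h_1$ with $\grp{h_1,h'}\simeq\bbZ^2$ (e.g.\ $G=K^*\ltimes K$ for $K=\bbQ$ or $\bbQ(\sqrt2)$, where $Z\simeq K^*$). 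Such an $h'$ normalizes $C_1$ without lying in it, and one then builds arbitrarily long segments in the tree all fixed by the infinite-order element $h_1$; so the hypothesis of Proposition \ref{prop_gen} simply fails for this splitting. The 3-type condition gives you no leverage here, because all the elements involved are homotheties centralizing the same involution, and the class $\calc'$ places no malnormality-type restriction on $\grp{u,h}$ inside $\Cent_G(u)$.

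The paper's proof avoids this by changing the splitting. First conjugate so that $u_1=u_2=u$ (all involutions are conjugate), and set $Z=\Cent_G(u)$. Then $G_1$ is rewritten as an amalgam $G*_Z H$ over the \emph{full} centralizer, where $H=\grp{Z,t\mid tut\m=u,\ th_2t\m=h_1}$ is an HNN extension of $Z$. Since $u$ is the unique involution of $Z$, it is central in $H$ and is the unique involution of $H$, so hypothesis (\ref{it_H}) of Proposition \ref{prop_gen} holds and $H$ satisfies the 3-type condition (Remark \ref{rem_H}); and since $Z$ is malnormal in $G$ (Remark \ref{rem_cent}), every segment of length $>2$ contains two edges meeting at a $G$-vertex, forcing trivial stabilizer, so the amalgam is $2$-acylindrical. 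In effect, all the uncontrolled intersections your argument runs into are swept into the single vertex group $H$, where they are harmless. If you want to salvage your line of attack, this regrouping is the missing idea; as written, the proposal does not prove the corollary.
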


\begin{proof}
  This HNN extension is not acylindrical. We rewrite it as an acylindrical amalgam as follows.
  First, since all involutions of $G$ are conjugate, we may replace $h_2$ and $u_2$ by some conjugates 
  and change $t$ accordingly
  to ensure that $u_2=u_1$, and we let $u=u_1=u_2$. Then  
  $G_1=\grp{ G,t\ |\ t u t\m= u,\ th_2t\m=h_1}$.
  Let $Z=\Cent_G(u)$ be the centralizer of $u$ in $G$.
  Then $G_1= G*_Z H$ where $H$ is the HNN extension $$H=\grp{Z,t\ \vert \ tut\m=u,\ th_2t\m =h_1}.$$
  Since no two involutions commute in $G$, $u$ is the unique involution of $Z$.
  Since $H$ is an HNN extension, any involution of $H$ is conjugate to an element of $Z$ thus conjugate to $u$.
  On the other hand, we see that $u$ is central in $H$, so $u$ is the unique involution of $H$.
  Moreover, all elements of $H$ are of order $1$, $2$ or $\infty$ because this is the case for $Z\subset G$.
  As noted in Remark \ref{rem_H}, this shows that $H$ satisfies the 3-type condition.

  Now $Z$ is malnormal in $G$ (see Remark \ref{rem_cent}) and it follows that the Bass-Serre tree of the amalgam $G_1= G*_Z H$
  is $2$-acylindrical. Thus Proposition \ref{prop_gen} applies.
\end{proof}

\section{An infinite simple sharply 2-transitive group with exactly four conjugacy classes}\label{sec_4conj}

%In the first subsection below, we will 
In this section, we use the tools of the previous section to 
construct an infinite countable simple sharply 2-transitive group of characteristic 0 in $\calc'$ with exactly four conjugacy classes (but still not finitely generated).
As proved in Proposition \ref{optimal}, this number of conjugacy classes is the smallest possible.
%and in the second subsection we will prove that this number of conjugacy classes is as small as possible.

%We will then use this group to build a finitely generated one in Section \ref{sec_fg}.

%In the second subsection, we will prove that a sharply 2-transitive group of order $>6$ has at least four conjugacy classes provided that $\mathrm{char}(G)\neq 2$ (see Proposition \ref{optimal} below), or provided that $G$ has no non-trivial abelian normal subgroup (see Corollary \ref{optimal2}), which proves that the number of conjugacy classes in 

%\subsection{Construction}

\begin{te}\label{4_classes}
Given a countable group $G_0$ belonging to $\calc'$,
there exists a countable group $G$ containing $G_0$ such that 
\begin{itemize}
    \item $G$ is sharply 2-transitive and belongs to $\calc'$,
    \item $G$ has exactly four conjugacy classes: 
the trivial element, the set of involutions, the set of translations and the set of homotheties. 
\end{itemize}
\end{te}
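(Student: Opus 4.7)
The plan is to construct $G$ as a countable increasing union $G_0 \subset G_1 \subset G_2 \subset \cdots$ in $\calc'$, where each $G_{n+1}$ is obtained from $G_n$ by a single HNN extension of one of the three types provided by Corollaries \ref{cor_HNN_paire}, \ref{cor_HNN_isolated}, and \ref{cor_HNN_homothetie}. Each such extension preserves the class $\calc'$ and preserves maximality, so by Lemma \ref{lem_chaine} the limit $G = \bigcup_n G_n$ lies in $\calc'$, contains $G_0$, is countable, and each inclusion $G_n \subset G$ preserves maximality.

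Along the construction I schedule three families of tasks. Task (a): for each maximal pair $(u,v)$ of type $D_\bbZ$ arising in some $G_n$, apply Corollary \ref{cor_HNN_paire} at some later stage to convert it into a pair of type $D_\bbQ$. Task (b): for each maximal isolated element $h$ arising in some $G_n$, apply Corollary \ref{cor_HNN_isolated} at some later stage, using as auxiliary element $g$ a $D_\bbQ$-translation of the current group (which exists because $G_n \in \calc$ contains pairs of type $D_\bbQ$), to turn $h$ into a $D_\bbQ$-translation. Task (c): fix once and for all a homothety $h_0 \in G_0$, which exists by Lemma \ref{lemme_exists_homothetie}; then for each homothety $h$ arising in some $G_n$, apply Corollary \ref{cor_HNN_homothetie} at some later stage to make $h$ conjugate to $h_0$. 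Each $G_n$ is countable, so the total collection of tasks is countable, and a standard diagonal enumeration lets me interleave them so that every task is processed at some subsequent stage; if at that moment the relevant object is no longer of the stated type, the task is simply skipped.

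I then verify the properties of $G$. If some pair $(u,v) \in \cali_G^{(2)}$ were of type $D_\bbZ$ in $G$, its commensurator $\Comm_G(\grp{u,v})$ would be infinite dihedral (Remark \ref{rem_type}), hence generated by some pair $(u',v')$ lying in some $G_n$; one checks that $(u',v')$ is then a maximal pair of type $D_\bbZ$ in $G_n$ (its $G_n$-commensurator is contained in its $G$-commensurator, which equals $\grp{u',v'}$), so task (a) eventually converts it into a pair of type $D_\bbQ$, contradicting that $(u',v')$ is still $D_\bbZ$ in $G$ since the two types are mutually exclusive. An analogous argument using task (b) shows $G$ has no isolated element: if $h \in G$ were isolated, a generator $\hat h$ of its maximal cyclic envelope would be maximal isolated in some $G_n$ by malnormality, scheduled and converted into a $D_\bbQ$-translation, and since any nonzero power of a translation in $\calc'$ is again a translation of the same type, $h = \hat h^k$ could not be isolated in $G$. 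Every homothety of $G$ lies in some $G_n$ and is scheduled by task (c) to be conjugated to $h_0$, so all homotheties are mutually conjugate in $G$.

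Consequently all pairs of distinct involutions of $G$ are of type $D_\bbQ$, and by Remark \ref{rq_C_et_s2t} the group $G$ acts sharply 2-transitively on $\cali_G$ (of characteristic $0$, since $G \in \calc'$). The conjugacy classes of $G$ are then exactly the identity, the class of involutions (transitive by Definition \ref{dfn_class_C}), the class of translations (all pairs of involutions being of type $D_\bbQ$ and $G$ being transitive on them), and the class of homotheties (all conjugate to $h_0$), giving exactly four classes. The main obstacle is the bookkeeping for the diagonal enumeration: each HNN extension introduces a stable letter, which is itself a new infinite-order element to be classified as translation, homothety, or isolated, and possibly rescheduled in a later task; the enumeration must guarantee that every task from every stage is eventually processed, and when a task is processed one must check that the hypotheses of the relevant corollary are still satisfied (maximality of the object, availability of $D_\bbQ$-translations and homotheties in the current group), all of which follow from the preservation of maximality and the standing structure of $\calc'$-groups.
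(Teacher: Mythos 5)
Your proposal is correct and follows essentially the same route as the paper: enumerate the maximal $D_\bbZ$-pairs, maximal isolated elements and homotheties, kill each obstruction with the appropriate HNN extension from Corollaries \ref{cor_HNN_paire}, \ref{cor_HNN_isolated} and \ref{cor_HNN_homothetie}, and pass to the limit via preservation of maximality and Lemma \ref{lem_chaine}. The only differences are bookkeeping ones (a single diagonally scheduled chain versus the paper's two-level induction $G_n^m$, and a fixed global target homothety $h_0\in G_0$ versus the paper's per-level target), which do not change the argument.
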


\begin{proof}
  Starting from $G_0$, we are going to construct inductively
  an increasing sequence of groups $(G_n)_{n\in\mathbb{N}}$ in the class $\mathcal{C'}$ 
  such that the group $G=\cup_{n\in\mathbb{N}}G_n$ is in the class $\calc'$, and such that all its pairs of involutions are of type $D_\bbQ$.
    As noticed in Remark \ref{rq_C_et_s2t}, this implies that $G$ acts sharply 2-transitively on its set of involutions.  
%  a sharply 2-transitive group of characteristic 0 with exactly four conjugacy classes, namely the trivial element, the set of involutions, the set of $D_\bbQ$-translations, and the set of homotheties.

  We note that if $g\in G_n$ is a homothety (\ie it centralizes an involution) in $G_n$, then it still is a homothety in $G_{n+1}$.
  Similarly, if $g\in G_n$ is a translation (\ie a product of 2 distinct involutions) in $G_n$, it still is a translation in $G_{n+1}$,
   its type may change from $D_\bbZ$ to $D_\bbQ$ but not the other way around.
  
%  We start with $G_0=GA(\mathbb{Q})=\bbQ^*\ltimes \bbQ$. We already noted that it is in the class $\calc'$ (with no isolated element and no pair of involutions of type $D_\bbZ$). 
  
  Suppose that the group $G_n$ has already been constructed, and let's construct $G_{n+1}$  as an increasing union of groups $G^m_n$, starting
  with $G_n^0=G_n$.
  We fix $(u_0,v_0)$ a pair of involutions of type $D_\bbQ$ in $G_0$.
  Consider an enumeration $g_1,g_2,\dots,$ of the maximal isolated elements of $G_n$,
  an enumeration $h_1,h_2,\ldots$ of its homotheties,
  and an enumeration $(u_1,v_1), (u_2,v_2),\dots$ of the set maximal pairs of involutions of type $D_\bbZ$.
  For each homothety $h_k$, we denote by $i_k$ the unique involution it centralizes (uniqueness follows from 2-sharpness of the action on involutions).
  Starting with $G_n^0=G_n$, we define inductively an increasing sequence of groups $G_n^m$ as follows.
\begin{enumerate}
\item  Define $G_n^{m+1/3}=\grp{G_n^{m},t \ | \ tu_m t\m=u_0, \ tv_mt\m=v_0}$ as in Corollary \ref{cor_HNN_paire} if $(u_m,v_m)$ is maximal of type $D_\bbZ$  in $G_n^{m}$ and
  $G_n^{m+1/3}=G_n^{m}$ otherwise.
\item Define $G_n^{m+2/3}=\grp{G_n^{m+1/3},t \ | \ tg_mt\m=u_0v_0}$ as in Corollary \ref{cor_HNN_isolated} if $g_m$ is a maximal isolated element in $G_n^{m+1/3}$, and $G_n^{m+2/3}=G_{n}^{m+1/3}$ otherwise.
  \item Define $G_n^{m+1}=\grp{G_n^{m+2/3},t \ | \ ti_mt\m=i_1,\ th_m t\m= h_1}$ as in Corollary \ref{cor_HNN_homothetie};
\end{enumerate}
We finally define $G_{n+1}=\bigcup_{m}G_n^m$.
Corollaries \ref{cor_HNN_paire}, \ref{cor_HNN_isolated} and \ref{cor_HNN_homothetie} 
show that each group $G_n^m$ is in $\calc'$ and that the embedding of each group in the next one preserves maximality. 
Lemma \ref{lem_chaine} then concludes that $G_{n+1}$ belongs to $\calc'$, and that the embedding $G_n\subset G_{n+1}$ preserves maximality.
Applying again Lemma \ref{lem_chaine} to $G=\cup_n G_n$, we see that $G$ belongs to $\calc'$.

We now check that $G$ acts sharply 2-transitively on its set of involutions.
Since $G$ belongs to $\calc$, it suffices to check 
that no pair of involutions of $G$ is of type $D_\bbZ$ (see Remark \ref{rq_C_et_s2t}).
So consider $(u,v)$ a pair of involutions of $G$, and let $n\in \bbN$ be such that $u,v\in G_n$.
If $(u,v)$ is of type $D_\bbQ$ in $G_n$, it is obviously of type $D_\bbQ$ in $G$.
Otherwise, let $(\tilde u,\tilde v)$ be a maximal pair of type $D_\bbZ$ in $G_n$ such that $\grp{u,v}\subset \grp{\tilde u,\tilde v}$.
Since all embeddings preserve maximality, at each step, the pair $(\tilde u,\tilde v)$ remains maximal unless it becomes of type $D_\bbQ$.
Then Step 1 of the construction ensures that $(\tilde u,\tilde v)$ and $(u,v)$ become of type $D_\bbQ$ at some step. This shows that $(u,v)$
is of type $D_\bbQ$ in $G_{n+1}$, hence also in $G$.
This shows that $G$ acts sharply 2-transitively on its set of involutions. It also follows that all translations of $G$ are conjugate.

Since $G$ belongs to $\calc'$, there remains to show that $G$ has no isolated element and that all homotheties of $G$ are conjugate.

Assume by contradiction that there exists $g\in G$ a maximal isolated element, (equivalently,
$\grp{g}$ is malnormal in $G$). Then $g$ is a maximal isolated element in all $G^m_n$, 
but step 2 ensures that $g$ is conjugate in $\grp{u_0,v_0}$ in $G_{n+1}$, contradicting that $g$ is isolated in $G$.

Similarly, step 3 ensures that all homotheties of $G_n$ become conjugate in $G_{n+1}$.
Since any homothety of $G$ is a homothety in some $G_n$, this shows that all homotheties of $G$ are conjugate.\end{proof}

Having only four conjugacy classes, the group $G$ above is not far from being simple, but may still fail to be so. 
%One can easily ensure simplicity of $G$ as follows (see also \cite{AT21}).
The following result shows that we can additionally ensure that $G$ is simple (see also \cite{AT21}).

\begin{te}\label{thm_simple}
Given a countable group $G_0$ belonging to $\calc'$,
there exists a countable group $G$ containing $G_0$ such that 
\begin{itemize}
    \item $G$ is sharply 2-transitive and belongs to $\calc'$,
    \item $G$ has exactly four conjugacy classes: 
the trivial element,  the set of involutions, the set of translations and the set of homotheties,
\item and $G$ is simple.
\end{itemize}
\end{te}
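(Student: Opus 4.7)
The plan is to augment the inductive construction of Theorem \ref{4_classes} with a preliminary modification of $G_0$ that forces one additional non-nearly-affine relation. Since the final $G$ will have exactly the four conjugacy classes $\{1\}$, $\cali_G$, $T$ (translations), $H$ (homotheties), every normal subgroup of $G$ is a union of these. Among the six proper non-trivial unions, $\{1\}\cup\cali_G$ and $\{1\}\cup\cali_G\cup H$ are automatically not subgroups in characteristic $0$, because the product of two distinct involutions is a translation of infinite order, hence lies in neither set. The four remaining unions $\{1\}\cup T$, $\{1\}\cup H$, $\{1\}\cup\cali_G\cup T$, $\{1\}\cup T\cup H$ are simultaneously ruled out as soon as
\begin{itemize}
\item[$(\alpha)$] some product of two translations is a homothety, and
\item[$(\beta)$] some product of two homotheties is an involution,
\end{itemize}
since $(\alpha)$ breaks the closure of the two unions containing $T$ but not $H$, while $(\beta)$ breaks the closure of the two unions containing $H$ but not $\cali_G$. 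So simplicity reduces to arranging $(\alpha)$ and $(\beta)$ in $G$.

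Condition $(\beta)$ is automatic in any group of $\calc'$. By Lemma \ref{lemme_exists_homothetie}, $G$ contains a homothety $h$ centralizing some involution $i$. Set $h_1:=hi$ and $h_2:=h^{-1}$. Since $hi=ih$, we have $(hi)^2=h^2$, so $hi$ has infinite order, and $hi$ centralizes $i$, making $h_1$ a homothety; $h_2$ is trivially one. Their product $h_1 h_2=hi\cdot h^{-1}=i$ is an involution, as required.

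To arrange $(\alpha)$, we first modify the starting group. By Example \ref{ex_produit_libre}, the free product $G_0':=G_0 * \langle z \rangle$ with $\langle z\rangle\simeq\bbZ$ lies in $\calc'$. Pick a translation $t\in G_0$ and a homothety $h_0\in G_0$ (both exist since $G_0\in\calc'$, using Lemma \ref{lemme_exists_homothetie} and the existence of a $D_\bbQ$-pair). The element $w:=t\cdot(ztz^{-1})$ is a product of two translations of $G_0'$ (conjugation of a translation yields a translation), and it is a cyclically reduced word of syllable length $4$ in the free product. A standard Bass-Serre analysis shows that $w$ acts hyperbolically on the Bass-Serre tree, is primitive, and generates a malnormal cyclic subgroup; hence $w$ is maximal isolated in $G_0'$. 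Corollary \ref{cor_HNN_isolated} then produces
\[G_0'':=\langle G_0',\, s \mid s h_0 s^{-1}=w\rangle\in\calc'.\]
In $G_0''$, the element $w$ is conjugate to the homothety $h_0$ and is therefore itself a homothety, whereas $t$ and $ztz^{-1}$ remain translations. Thus $(\alpha)$ holds in $G_0''$.

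We finally run the inductive construction of Theorem \ref{4_classes} starting from $G_0''$ instead of $G_0$. The resulting countable group $G\in\calc'$ contains $G_0$, is sharply 2-transitive on its involutions, and has exactly four conjugacy classes. All the HNN extensions used in that construction preserve translation and homothety types (they only convert $D_\bbZ$-pairs to $D_\bbQ$-pairs, absorb isolated elements into translations or homotheties, or merge homotheties), so $(\alpha)$ and $(\beta)$ are inherited by $G$; by the reduction above, $G$ is simple. The main technical point to verify is that $w$ is maximal isolated in $G_0'$, which follows from standard facts about cyclic stabilizers of axes of hyperbolic elements in Bass-Serre trees of free products.
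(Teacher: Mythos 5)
Your proposal is correct and follows essentially the same strategy as the paper: pre-process $G_0$ by a free product with $\bbZ$ followed by Corollary \ref{cor_HNN_isolated} to turn a suitable isolated element into a homothety, then run Theorem \ref{4_classes} and rule out each candidate normal subgroup (a union of the four conjugacy classes) by exhibiting a product that escapes it. The only real difference is the choice of forced relation: the paper makes a product of three involutions and a product of four involutions into homotheties and funnels every case through ``$N$ contains a homothety'', whereas you make a product of two translations into a homothety and observe that a product of two homotheties being an involution is automatic ($hi\cdot h^{-1}=i$); both choices dispose of all six proper unions, and your unproved assertion that $w=t(ztz^{-1})$ generates a malnormal cyclic subgroup of $G_0*\langle z\rangle$ is at the same level of detail as the paper's analogous assertion that $uv(twt^{-1})$ is isolated, and is indeed a routine Bass--Serre computation.
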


Taking for instance $G_0=\bbQ^*\ltimes \bbQ$, we get the following result.

\begin{co}\label{co_simple}
There exists a countable sharply 2-transitive group in the class $\calc'$
which is simple and has exactly four conjugacy classes.\qed
\end{co}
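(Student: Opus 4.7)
The plan is to modify the inductive construction of Theorem \ref{4_classes} by adding further HNN extensions at each stage $n$, designed to kill every potential proper non-trivial normal subgroup of the final group. Since the construction already forces exactly four conjugacy classes $\{1\}, \cali_G, T, H$ (trivial, involutions, translations, homotheties), any non-trivial normal subgroup $N$ of the limit group is a union of these classes. In characteristic $0$ the subset $\{1\}\cup \cali_G$ is never a subgroup (a product of two distinct involutions has infinite order), so the only candidates for a proper non-trivial normal subgroup are $\{1\}\cup T$, $\{1\}\cup H$, $\{1\}\cup\cali_G\cup T$ (the ``near-affine'' case), and $\{1\}\cup T\cup H$.

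To rule out all four candidates uniformly, it suffices to arrange that for every non-trivial element $g$ of $G$, its normal closure $\ngrp{g}$ equals $G$. Because the non-trivial conjugacy classes are $\cali_G$, $T$, $H$, this amounts to three conditions, one per class. I will enrich the induction of Theorem \ref{4_classes} with additional steps, processed in a standard diagonal fashion: at stage $n$ I enumerate the non-trivial elements $g_1,\dots,g_n$ of $G_n$ and, for each $g_k$, perform HNN extensions producing conjugates of $g_k$ whose product is the fixed involution $u_0$ (so that $u_0\in\ngrp{g_k}$), together with one further HNN extension forcing some product of two translations to equal a homothety. Once $u_0\in \ngrp{g_k}$, the normal closure contains all involutions and all translations of type $D_\bbQ$; combined with the auxiliary step producing a homothety as a product of translations, this gives $\ngrp{g_k}=G$ in the limit, hence simplicity.

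Each added HNN extension is realized through Corollary \ref{cor_HNN_paire}, \ref{cor_HNN_isolated} or \ref{cor_HNN_homothetie}, possibly after preparatory extensions: to express $u_0$ as a product of two conjugates of $g_k$, one first creates (using the three corollaries) a maximal isolated element $\hat g_k'$ conjugate to a power of $g_k$, then converts it into a translation of type $D_\bbQ$ written as $u_0 v$, and finally uses a further extension to bring $v$ and $u_0 g_k^{-1}$ into the orbit of $g_k$. Passing to $G=\bigcup_n G_n$ and applying Lemma \ref{lem_chaine} preserves membership in $\calc'$ and the fact that every pair of involutions of $G$ is of type $D_\bbQ$, so the same arguments as in Theorem \ref{4_classes} show that $G$ is sharply $2$-transitive with exactly four conjugacy classes.

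The main obstacle is ensuring that each additional HNN extension preserves both membership in $\calc'$ and the maximality of the embedding, while achieving the desired conjugacy. The conjugations we need (expressing a prescribed element as a product of conjugates of a given $g_k$) must be decomposed into elementary steps that each satisfy the hypotheses of Corollaries \ref{cor_HNN_paire}--\ref{cor_HNN_homothetie}: either conjugating a \emph{maximal} pair of type $D_\bbZ$ onto $(u_0,v_0)$, or conjugating a \emph{maximal} isolated element onto a translation, or conjugating two homotheties together with their centralized involutions. The delicate bookkeeping is that elements introduced at an early sub-step must still satisfy the maximality required to be used in a later sub-step; this is handled, as in Theorem \ref{4_classes}, by exploiting the fact that the embeddings in Corollaries \ref{cor_HNN_paire}--\ref{cor_HNN_homothetie} preserve maximality, so maximal isolated elements and maximal $D_\bbZ$-pairs created in $G_n^m$ remain maximal in $G_n^{m+1}$ unless they are consumed by that very step.
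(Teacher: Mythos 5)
Your high-level reduction is sound: since the limit group has exactly the four classes $\{1\},\cali_G,T,H$, simplicity amounts to showing that the normal closure of one involution, one translation and one homothety is everything, and your list of candidate proper normal subgroups is correct (note $\{1\}\cup\cali_G\cup H$ is also excluded, for the same reason as $\{1\}\cup\cali_G$). The gap is in the implementation. Corollaries \ref{cor_HNN_paire}, \ref{cor_HNN_isolated} and \ref{cor_HNN_homothetie} only \emph{conjugate} subgroups of very specific types (a maximal $D_\bbZ$-pair onto $(u_0,v_0)$, a maximal isolated element onto a translation or homothety, two homotheties together with their centralized involutions); they never impose a product relation such as $u_0=(ag_ka^{-1})(bg_k^{\pm1}b^{-1})$. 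Several of your intermediate steps are not realizable by these tools: there is no ``maximal isolated element conjugate to a power of $g_k$'' when $g_k$ is an involution, and the final step of ``bringing $v$ and $u_0g_k^{-1}$ into the orbit of $g_k$'' has no supporting corollary (these elements need not even have the same order as $g_k$, and no corollary conjugates arbitrary elements). So the mechanism by which $u_0$ is supposed to enter $\ngrp{g_k}$ is never actually produced, and the ``delicate bookkeeping'' you defer is precisely where the argument breaks.

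The paper's proof of Theorem \ref{thm_simple} (of which Corollary \ref{co_simple} is an instance, taking $G_0=\bbQ^*\ltimes\bbQ$) avoids all per-element surgery. Before running the construction of Theorem \ref{4_classes}, one passes to $G_0*\grp{t}$ and $G_1*\grp{s}$, where the words $h=uv(twt^{-1})$ and $h'=h(szs^{-1})$ are \emph{isolated} (this is where the multiplicative relations are engineered, as reduced words in a free product), and Corollary \ref{cor_HNN_isolated} turns them into homotheties. In the resulting group $G$, a normal subgroup containing an involution contains $h=u_1u_2u_3$, one containing a translation contains $h'=(u_1u_2)(u_3u_4)$, and one containing a homothety contains $h'h^{-1}$, which is a conjugate of the involution $u_4$; since each class generates the next, $N=G$. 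If you want to salvage your approach, this is the missing ingredient: the cross-class relations must be built into specific elements \emph{before} they are converted into homotheties, because the conversion corollaries themselves cannot create them.
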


\begin{proof}[Proof of Theorem \ref{thm_simple}]
We claim that one can construct a group $G'_0$ containing $G_0$ and in the class $\calc'$ such that
there are four distinct involutions $u_1,u_2,u_3,u_4\in G'_0$ such that $h=u_1u_2u_3$ and $h'=u_1u_2u_3u_4$ are two homotheties.

If the claim holds, then one can take for $G$ the group %with 4 conjugacy classes 
constructed by applying Theorem \ref{4_classes} to $G'_0$.
Indeed, $h$ and $h'$ are still homotheties in $G$, and let us check that $G$ is simple.
Consider $N$ any non-trivial normal subgroup. If $N$ contains an involution or a translation, then it contains all of them so contains $h$ or $h'$ respectively. 
This shows that in all cases, $N$ contains all homotheties, so $N$ also contains the involution $h'h^{-1}$, hence all involutions, all translations, and $N=G$.

We now prove the claim. The group $G_0*\grp{t}$ belongs to $\calc'$ by Example \ref{ex_produit_libre}, and if $u,v,w,z\in G_0$ are 4 distinct involutions then the element $h=uv(twt\m)$ is isolated, and Proposition \ref{cor_HNN_isolated} yields a larger group $G_1$ in which $h$ is a homothety. Similarly, the free product $G_1*\grp{s}$ belongs to $\calc'$, the element $h'=h(szs\m)\in G_1*\grp{s}$ is isolated, and we can embed $G_1*\grp{s}$ using Proposition \ref{cor_HNN_isolated} into a larger group $G'_0$ to ensure that $h'$ is a homothety. This proves our initial claim.
\end{proof}

\section{Getting finite generation}\label{sec_fg}

\subsection{Small cancellation over relatively hyperbolic groups}\label{sec_SC}

Let $G$ be a group hyperbolic relative to a collection of subgroups $\mathcal{H}=\lbrace H_{\lambda}\rbrace_{\lambda\in \Lambda}$.
Recall that an element $g\in H$ is \emph{parabolic} if it is conjugate in some $H_\lambda$,
and \emph{hyperbolic} if it is not parabolic and of infinite order (we note that in \cite{Osi10}, hyperbolic elements are synonymous with non-parabolic, and may have finite order).
Given a hyperbolic element $h\in G$, its commensurator $\Comm_G(\grp{h})=\{g\in G \ | \ gh^n g\m=h^{\pm n} \text{ for some } n\in\mathbb{N}^{\ast}\}$ %is the commensurator of $\grp{h}$. It 
is the unique maximal virtually cyclic subgroup of $G$ containing $h$. Following \cite{Osi10}, we use the following technical definition.

\begin{de}
Let $G$ be a relatively hyperbolic group. We say that a subgroup $H\subset G$ is \emph{suitable} if it contains two hyperbolic elements $h_1,h_2\in H$ (of infinite order) such that $\Comm_G(\grp{h_1})\cap \Comm_G(\{h_2\})=\lbrace 1\rbrace$.
\end{de}

We need the following slight refinement of Theorem 2.4 in \cite{Osi10}.

\begin{te}\label{variante}
Let $G$ be a group hyperbolic relative to a subgroup $P$, and $H\subset G$ be a suitable subgroup. Let $t_1,\dots,t_n$ be arbitrary elements of $G$. Then there exists an epimorphism $\eta : G\twoheadrightarrow Q$ such that:
\begin{enumerate}
    \item $Q$ is hyperbolic relative to $\eta(P)$;
    \item $\eta$ is injective in restriction to $P$;
    \item $\eta(H)$ is a suitable subgroup of $Q$;
    \item $\eta(t_1),\dots,\eta(t_n)$ belong to $\eta(H)$;
    \item \label{it_lift} for every finite subgroup $F$ of $Q$, there exists a subgroup $F'\subset G$ isomorphic to $F$ such that $\eta(F')=F$. 
\end{enumerate}
\end{te}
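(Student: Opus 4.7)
The plan is to apply Osin's Theorem 2.4 from \cite{Osi10} and then extract the additional property~(5) from the small cancellation construction underlying its proof.

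First, I would invoke Osin's theorem for the data $(G,H,t_1,\dots,t_n)$. It produces an epimorphism $\eta\colon G\twoheadrightarrow Q$ satisfying properties (1)--(4) directly. The construction proceeds by killing a finite set of relators of the form $t_i a_i$ with $a_i\in H$: the suitability of $H$, \emph{i.e.}\ the existence of two hyperbolic elements whose maximal elementary subgroups intersect trivially, is precisely what allows one to build such $a_i$ so that the resulting set of relators satisfies a sufficiently strong small cancellation condition over the relatively hyperbolic group $G$. Relative hyperbolicity of $Q$ with respect to $\eta(P)$, injectivity of $\eta|_P$, suitability of $\eta(H)$, and the containment $\eta(t_i)=\eta(a_i^{-1})\in \eta(H)$ are all part of Osin's conclusion.

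The only novelty is property~(5), which is a standard feature of small cancellation quotients of (relatively) hyperbolic groups: any finite subgroup of the quotient is conjugate to the image of a finite subgroup of the original group. This goes back to Delzant's analysis of quotients of hyperbolic groups and was extended to the relatively hyperbolic setting by Osin; it can be read off from the van Kampen diagram arguments in \cite{Osi10} used to prove Theorem 2.4, and is also a special case of more modern results on small cancellation over groups with hyperbolically embedded subgroups. The mechanism is that an element of finite order in $Q$ must stabilize a face of the small cancellation complex associated to the relators $\{t_i a_i\}$; stabilizers of such faces are images of finite subgroups of $G$ (or of its peripheral subgroup, which embeds by~(2)), so any finite subgroup of $Q$ lifts isomorphically.

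The main difficulty is thus expository rather than mathematical: one must locate the precise lifting statement in the literature, or else reread Osin's proof of Theorem 2.4 and check that his construction does not create new torsion. Once this is in hand, there is nothing further to verify, and the theorem follows from Osin's result plus this standard small cancellation observation.
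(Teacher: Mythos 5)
Your proposal matches the paper's proof: the authors likewise cite Osin's Theorem 2.4 for properties (1)--(4), and for property (5) they note that Osin's statement (given for finite cyclic subgroups) extends to all finite subgroups by the same argument, referring also to Coulon's and Dahmani--Guirardel's versions of the torsion-lifting statement. The approach is essentially identical.
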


\begin{proof} This is Theorem 2.4 in \cite{Osi10} except for Assertion \ref{it_lift} which is only stated for finite cyclic groups but the argument actually works for all finite groups. One can also refer to \cite[Proposition 6.12]{Cou13} where this assertion is proved for small cancellation quotients of hyperbolic groups, or to \cite[Lemma 4.3]{DG_recognizing} in the context of Dehn fillings.\end{proof}

\subsection{Construction}\label{sec_construction}

The main result of this section is the following Theorem from which we will deduce
 Theorems \ref{thm_fg} and \ref{thm_4classes}.

% In this subsection, we prove Theorems \ref{thm_fg} and \ref{thm_4classes}. These two results follow immediately from Corollary \ref{corollaire_final} below, which will be proved by applying the following theorem to any countable simple sharply 2-transitive group with four conjugacy classes that belongs to the class $\mathcal{C}'$, as constructed in Section \ref{sec_4conj}.

\begin{te}\label{theoreme2}Let $G_0$ be a sharply 2-transitive group belonging to the class $\mathcal{C}'$. Then there exists a 2-generated sharply 2-transitive group $G$ with Kazhdan property (T), such that $G$ contains $G_0$ and every element of $G$ is conjugate to an element of $G_0$.
%Moreover, one can construct such a group $G$ which has property (T).
\end{te}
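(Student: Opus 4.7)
The plan is to adapt the inductive construction of Theorems \ref{4_classes} and \ref{thm_simple}, interleaving each HNN extension from Section \ref{sec_4conj} with an application of Theorem \ref{variante} that re-absorbs the new stable letter into a fixed 2-generated Kazhdan ``seed'' group; this controls generation and property (T) throughout the induction. Concretely, fix a non-elementary, torsion-free, Gromov hyperbolic, 2-generated group $\Gamma$ with property (T) (such $\Gamma$ exist, e.g.\ as suitable small cancellation quotients of torsion-free uniform lattices in $\mathrm{Sp}(n,1)$). Set $G^{(0)}:=G_0 * \Gamma$. By Proposition \ref{prop_gen}, $G^{(0)}\in\calc'$; since $\Gamma$ is Gromov hyperbolic, $G^{(0)}$ is hyperbolic relative to $\{G_0\}$, and $\Gamma$ is a suitable subgroup in the sense of Section \ref{sec_SC}.

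Build inductively a chain of surjections $G^{(0)} \twoheadrightarrow G^{(1)} \twoheadrightarrow G^{(2)} \twoheadrightarrow \cdots$ with each $G^{(n)}\in\calc'$, relatively hyperbolic with peripheral $\eta_n(G_0)\cong G_0$, and having $\eta_n(\Gamma)$ as a suitable subgroup. Fix a diagonal enumeration of the defects to cure: maximal pairs of involutions of type $D_\bbZ$, maximal isolated elements, pairs of homotheties to be identified, and successive elements of $G_0$ itself to be pushed into the image of $\Gamma$. At each step we treat one defect by first forming the HNN extension $\widetilde G^{(n)}$ prescribed by Corollary \ref{cor_HNN_paire}, \ref{cor_HNN_isolated}, or \ref{cor_HNN_homothetie}; since the edge group is elementary and quasi-malnormal, $\widetilde G^{(n)}$ remains in $\calc'$ and relatively hyperbolic with peripheral $G_0$. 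We then apply Theorem \ref{variante} to $\widetilde G^{(n)}$ with $H = \eta_n(\Gamma)$ and with the $t_i$'s consisting of the new stable letter together with the currently enumerated element of $G_0$, producing a quotient $G^{(n+1)}$ in which each such $t_i$ is mapped into $\eta_{n+1}(\Gamma)$. By Theorem \ref{variante}, $G^{(n+1)}$ is again relatively hyperbolic with peripheral $G_0$, the embedding of $G_0$ is preserved, and $\eta_{n+1}(\Gamma)$ remains suitable.

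Set $G := \varinjlim_n G^{(n)}$. An analog of Lemma \ref{lem_chaine} adapted to this mixed construction yields $G\in\calc'$; by the enumeration, every pair of distinct involutions of $G$ is eventually of type $D_\bbQ$, so $G$ acts sharply 2-transitively by Remark \ref{rq_C_et_s2t}. Every element of $G$ is conjugate to an element of $G_0$ by construction, and since every element of $G_0$ is itself forced to lie in some $\eta_n(\Gamma)$ along the induction, the image of $\Gamma$ surjects onto $G$; hence $G$ is a 2-generated quotient of $\Gamma$ and inherits property (T). The embedding $G_0\hookrightarrow G$ follows from assertion (2) of Theorem \ref{variante}.

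The chief technical difficulty is to preserve the $\calc'$ structure (quasi-malnormality of the maximal dihedral subgroups $D_{u,v}$, the 3-type condition, and the restriction of torsion to order at most $2$) under each small cancellation quotient of Theorem \ref{variante}. Assertion (\ref{it_lift}) of Theorem \ref{variante} is crucial here: it controls the finite subgroups of $G^{(n+1)}$ via those of $\widetilde G^{(n)}$, so Lemma \ref{lem_fini2} keeps torsion limited to order $2$. Verifying that commensurators and maximal virtually cyclic subgroups continue to behave as required for $\calc'$, and that the almost acylindricity inputs used in Proposition \ref{prop_gen} and its corollaries persist through the small cancellation step, is the key technical content that the remainder of the section must supply.
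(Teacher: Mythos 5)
Your architecture is essentially the paper's: form $G_0 * \Gamma$ with $\Gamma$ a $2$-generated torsion-free hyperbolic group with property (T), view it as hyperbolic relative to $G_0$ with $\Gamma$ suitable, and then alternate the HNN extensions of Corollaries \ref{cor_HNN_paire}, \ref{cor_HNN_isolated}, \ref{cor_HNN_homothetie} with applications of Theorem \ref{variante} that absorb the stable letters and the elements of $G_0$ into the image of $\Gamma$. The paper packages exactly this two-step move (elementary HNN extension followed by an Osin quotient) into a single statement, Proposition \ref{prop_star}, and instead of dynamically enumerating ``defects'' it enumerates once and for all the virtually cyclic subgroups $E_1,E_2,\dots$ of $G_1=G_0*H_1$ and conjugates each of them into the peripheral copy of $G_0$; sharp $2$-transitivity of the limit is then obtained by lifting any pair of involutions of $G_\infty$ back to $G_1$ (using the involution-lifting property) rather than by tracking maximal $D_\bbZ$ pairs stage by stage. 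Both bookkeeping schemes work, but the paper's is cleaner because it never needs a ``preserves maximality'' statement across the small cancellation quotients, only across the HNN embeddings.

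The genuine gap is the point you explicitly defer: you never establish that the quotient $G^{(n+1)}$ produced by Theorem \ref{variante} is again in $\calc'$ (and that its pairs of involutions and infinite-order elements have the required types). This is not a routine verification to be ``supplied by the remainder of the section''---it is the heart of the proof, occupying most of the paper's argument for Proposition \ref{prop_star}. Theorem \ref{variante} gives you relative hyperbolicity of $\bar G$ over $\bar P=\eta(P)$, injectivity on $P$, suitability of $\eta(H)$, and lifting of finite subgroups, but none of these by itself says that $\bar G$ acts freely on $\cali_{\bar G}^{(2)}$, that every pair of involutions is of type $D_\bbZ$ or $D_\bbQ$, or that the $3$-type condition survives. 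The paper's proof derives these from two further ingredients you do not invoke: (i) almost malnormality of the peripheral subgroup $\bar P$ in $\bar G$, which forces centralizers and commensurators of parabolic pairs into $\bar P\simeq P$ where the $\calc'$ structure of $G$ can be transported via $\eta|_P$; and (ii) the dichotomy parabolic/non-parabolic for elementary subgroups of a relatively hyperbolic group, which classifies non-parabolic pairs as type $D_\bbZ$ (their commensurator is virtually cyclic, hence $D_\bbZ$ by Lemma \ref{lem_fini2} once assertion (\ref{it_lift}) of Theorem \ref{variante} has bounded the torsion) and non-parabolic infinite-order elements as translations, homotheties or isolated elements according to whether their commensurator is $D_\bbZ$, $\bbZ\times(\bbZ/2\bbZ)$ or $\bbZ$. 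Without carrying out this analysis your induction has no basis for asserting that $G^{(n+1)}\in\calc'$, and hence no basis for the subsequent HNN step or for the passage to the limit.
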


Theorems \ref{thm_fg} and \ref{thm_4classes} are immediate consequences of the following corollary.

\begin{co}\label{corollaire_final}
There exists an 
%\Vcomment{ajout "infinite"}\Scomment{ok. C'est automatique à cause de la carac=0, mais on peut le laisser si tu veux insister sur ce point}\Vcomment{T'as raison, mais laissons-le pour insister effectivement.}
infinite sharply 2-transitive group $G$ of characteristic 0 with the following properties:
\begin{itemize}
    \item $G$ is generated by two elements,
    \item $G$ has exactly four conjugacy classes,
    \item $G$ is simple,
    \item $G$ has property (T).
\end{itemize}
\end{co}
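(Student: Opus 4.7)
The plan is to chain the two main constructions of the paper. First, I apply Corollary \ref{co_simple} (equivalently, Theorem \ref{thm_simple} starting from $G_0=\bbQ^*\ltimes \bbQ$) to produce a countable sharply 2-transitive group $G_1$ in $\calc'$ which is simple and has exactly four conjugacy classes: the trivial element, the involutions, the translations and the homotheties. Crucially, inspection of the proof of Theorem \ref{thm_simple} shows that $G_1$ contains four distinct involutions $u_1,u_2,u_3,u_4$ such that $h=u_1u_2u_3$ and $h'=u_1u_2u_3u_4$ are homotheties of $G_1$. Then I apply Theorem \ref{theoreme2} to $G_1$ to obtain a 2-generated sharply 2-transitive group $G$ with property (T) containing $G_1$ and such that every element of $G$ is conjugate in $G$ to some element of $G_1$. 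Infinitude and characteristic $0$ are immediate from $G_1\subset G$ and $G\in \calc'$.

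For the conjugacy class count, I would observe that the embedding $G_1\subset G$ preserves the three nontrivial types of elements: involutions remain involutions, translations ($uv$ with $u\ne v$ involutions) remain translations, and homotheties (elements centralizing an involution) remain homotheties; and since $G\in \calc'$ these three types are mutually exclusive. In $G$, all involutions are conjugate (sharp 2-transitivity) and all translations are conjugate (characteristic $\ne 2$). Any homothety of $G$ is conjugate in $G$ to an element of $G_1$, which must itself be a homothety by mutual exclusion of types, and all homotheties of $G_1$ are conjugate inside $G_1$; hence all homotheties of $G$ are conjugate. This gives at most four conjugacy classes, and Proposition \ref{optimal} supplies the matching lower bound.

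For simplicity, let $N$ be a nontrivial normal subgroup of $G$. Then $N$ must contain an entire nontrivial conjugacy class. If $N$ contains an involution, it contains all of them, hence contains the homothety $h=u_1u_2u_3$. If $N$ contains a translation, it contains all of them, hence contains the homothety $h'=(u_1u_2)(u_3u_4)$. In either case $N$ contains a homothety, therefore all homotheties; in particular both $h$ and $h'$ lie in $N$, so $u_4=h^{-1}h'\in N$ is an involution, which forces $N$ to contain all involutions and all translations as well, giving $N=G$.

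The genuine difficulty is concentrated in the two supplied black boxes: Theorem \ref{thm_simple} builds the simple four-class sharply 2-transitive group via the amalgam and HNN extension machinery of Section 3, while Theorem \ref{theoreme2} produces the finitely generated property-(T) overgroup via small cancellation over relatively hyperbolic groups. The deduction of the corollary itself is pure bookkeeping, and the essential hinge is the ``every element is conjugate to an element of $G_1$'' clause in Theorem \ref{theoreme2}: this is precisely what transfers the four-class structure from $G_1$ to the two-generated quotient-like overgroup $G$, and without it the conjugacy-class count could blow up.
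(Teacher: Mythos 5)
Your proposal is correct and follows the paper's own route: apply Theorem \ref{thm_simple} (to $G_0=\bbQ^*\ltimes\bbQ$) to get a countable simple four-class group in $\calc'$, then Theorem \ref{theoreme2} to get the 2-generated property (T) overgroup, with the upper bound on conjugacy classes coming from the ``every element is conjugate into $G_0$'' clause and the lower bound from Proposition \ref{optimal}. The only (harmless) divergence is the simplicity step: the paper just observes that a nontrivial normal subgroup of $G$ contains a full conjugacy class, hence meets $G_0$ nontrivially, and invokes simplicity of $G_0$, whereas you rerun the $h=u_1u_2u_3$, $h'=u_1u_2u_3u_4$ argument from the proof of Theorem \ref{thm_simple} inside $G$ --- which also works (and incidentally does not use simplicity of $G_0$ at all), but requires extracting those specific homotheties from that proof rather than using the theorem as a black box.
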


\begin{proof}[Proof of the corollary]
Start with a countable simple sharply 2-transitive group $G_0$ of characteristic 0 with four conjugacy classes and 
belonging to $\calc'$, whose existence is proved in Theorem \ref{thm_simple}. 
Let $G$ be the 2-generated group provided by Theorem \ref{theoreme2}. Then $G$ has at most four conjugacy classes,
and at least four by Proposition \ref{optimal}. 
Simplicity of $G$ follows immediately from the simplicity of $G_0$ since all conjugacy classes of $G$ intersect $G_0$.
%To prove that $G$ is simple, consider a non-trivial normal subgroup $N\subset G$, and $g\in N\setminus\{1\}$. Since $g$ has a conjugate in $G_0$, then $N\cap G_0$ is a non-trivial normal subgroup of $G_0$.
%Since $G_0$ is simple, $G_0\subset N$, and since all conjugacy classes of $G$ intersect $G_0$, $N=G$.
\end{proof}

The following notation will be convenient.
\begin{de}
Consider a group $G$ with two subgroups $P,H\subset G$. 
We say that $(G,P,H)$ satisfies $(*)$ if the following hold:
\begin{enumerate}
    \item $G$ is a group in the class $\calc'$,
    \item $G$ is hyperbolic relative to $P$,
    %\item $P$ contains a pair of involutions of type $D_{\bbQ}$ and a subgroup isomorphic to $\bbZ\times (\mathbb{Z}/2\mathbb{Z})$,
    \item $H$ is a suitable subgroup of $G$.
\end{enumerate}
\end{de}

The proof of Theorem \ref{theoreme2} is based on the following result which will be applied iteratively.

\begin{prop}\label{prop_star}
Consider a group $G$ with two subgroups $P,H\subset G$ such that $(G,P,H)$ satisfies $(*)$. Let $E\subset G$ be a finite or virtually cyclic subgroup of $G$. Then there exists an epimorphism $\eta:G\onto \bar G$ which is injective on $P$,
such that $(\bar G,\eta(P),\eta(H))$ satisfies $(*)$ and such that $\eta(E)\subset \eta(H)$ 
and $\eta(E)$ is conjugate to a subgroup of $\eta(P)$. Moreover, every involution of $\bar G$ is the image of an involution of $G$.\end{prop}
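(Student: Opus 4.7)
My plan is to obtain $\bar G$ as a small cancellation quotient of an auxiliary HNN extension $G'$ of $G$ engineered so that $E$ is already conjugate to a subgroup of $P$ inside $G'$; Theorem \ref{variante} then pushes the relevant elements into the suitable subgroup. In more detail, since $E$ is finite or virtually cyclic and $G \in \calc'$, Lemma \ref{lem_fini2} forces $E$ to be one of $\{1, \bbZ/2\bbZ, \bbZ, \bbZ \times \bbZ/2\bbZ, D_\bbZ\}$. Using Lemma \ref{lemme_exists_homothetie} together with the general structure of groups in $\calc'$, one can find in $P$ an isomorphic copy $E' \subset P$ of $E$, together with an isomorphism $\phi : E \to E'$. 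I would then form
\[
G' = \langle G, s \mid s e s^{-1} = \phi(e) \text{ for all } e \in E \rangle,
\]
and verify via an appropriate corollary among Corollary \ref{cor_HNN_paire}, Corollary \ref{cor_HNN_isolated}, Corollary \ref{cor_HNN_homothetie} (or a direct application of Proposition \ref{prop_gen}) that $G' \in \calc'$ with the embedding $G \hookrightarrow G'$ preserving maximality. A standard combination theorem for relatively hyperbolic groups then ensures that $G'$ remains hyperbolic relative to $P$, and that $H$, witnessed by the same pair of hyperbolic elements with trivial commensurator intersection, is still a suitable subgroup of $G'$.

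Next, I would apply Theorem \ref{variante} to $(G', P, H)$ with distinguished elements $t_1, \ldots, t_k$ generating $E$ (so $k \leq 2$) and $t_{k+1} = s$. This yields an epimorphism $\eta : G' \onto \bar G$ such that $\bar G$ is hyperbolic relative to $\eta(P)$, $\eta$ is injective on $P$, $\eta(H)$ is suitable in $\bar G$, and $\eta(s), \eta(E) \subset \eta(H)$. The HNN relation gives $\eta(s) \eta(E) \eta(s)^{-1} = \eta(E') \subset \eta(P)$, so $\eta(E)$ is conjugate in $\bar G$ to a subgroup of $\eta(P)$. For the involution-lifting clause: since $\eta(s) \in \eta(H) \subset \eta(G)$ and $G' = \langle G, s \rangle$, the restriction $\eta|_G : G \to \bar G$ is surjective. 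For any involution $i$ of $\bar G$, Theorem \ref{variante}(\ref{it_lift}) produces an involution $j' \in G'$ with $\eta(j') = i$; Bass-Serre theory applied to the HNN $G'$ gives $j' = (g')^{-1} j g'$ for some involution $j \in G$ and $g' \in G'$. Surjectivity of $\eta|_G$ provides $g \in G$ with $\eta(g) = \eta(g')$, so the involution $g^{-1} j g \in G$ maps to $i$ under $\eta$.

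The main obstacle is verifying that $\bar G \in \calc'$. Transitivity of $\bar G$ on its involutions follows from the lifting property just established combined with transitivity of $G$ on $\cali_G$ and surjectivity of $\eta|_G$. The more delicate points are to show that every pair of distinct involutions of $\bar G$ is of type $D_\bbZ$ or $D_\bbQ$, that the action on pairs of type $D_\bbQ$ is transitive, and that the 3-type condition (every element of infinite order is a translation, a homothety, or isolated) holds. These require a careful analysis of virtually cyclic subgroups of $\bar G$: the lifting property of Theorem \ref{variante}(\ref{it_lift}) together with the commensurator characterization of the two types (Remark \ref{rem_type}) and the classification in Lemma \ref{lem_fini2} must be combined to show that the small cancellation quotient does not introduce exotic virtually cyclic subgroups or pairs of involutions whose structure is not controlled by $G$ and $G'$. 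I expect this final verification to mirror the analyses of \cite{AT21,RT} but carried out in the relatively hyperbolic setting, tracking carefully how parabolic and hyperbolic elements of $G'$ evolve under $\eta$.
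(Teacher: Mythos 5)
Your architecture is the same as the paper's (an auxiliary HNN extension making $E$ conjugate into $P$, followed by Theorem \ref{variante} applied to the stable letter and the generators of $E$, with the same derivation of surjectivity of $\eta|_G$, of the conjugation of $\eta(E)$ into $\eta(P)$, and of the involution-lifting clause), but there are two genuine gaps. The first is that you take the HNN extension over $E$ itself. The paper instead conjugates the \emph{commensurator} $\hat E=\Comm_G(E)$, the maximal virtually cyclic subgroup containing $E$ (isomorphic to $\bbZ$, $D_\bbZ$ or $\bbZ\times(\bbZ/2\bbZ)$ by Lemma \ref{lem_fini2}), onto a copy in $P$, and this is not cosmetic. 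If $E$ is a \emph{non-maximal} pair of type $D_\bbZ$, or is generated by a non-maximal isolated element, or is generated by a homothety $h$ without its centralizing involution, then no ``appropriate corollary'' applies: Corollaries \ref{cor_HNN_paire} and \ref{cor_HNN_isolated} require maximality precisely because that is what makes the Bass--Serre tree acylindrical, and Corollary \ref{cor_HNN_homothetie} conjugates the whole group $\grp{u_i,h_i}\simeq\bbZ\times(\bbZ/2\bbZ)$, not just $\grp{h_i}$. Moreover, for a non-maximal $E$ the edge group is not almost malnormal in $G$, so $G$ is not hyperbolic relative to $\{P,E\}$ and the combination theorem does not yield that $G'$ is hyperbolic relative to $P$; and one can check that in your $G'$ the pair of involutions generating $\hat E\supsetneq E$ would typically be of neither type $D_\bbZ$ nor $D_\bbQ$ (quasi-malnormality of $\hat E$ is destroyed by the new conjugate of a $D_\bbQ$-subgroup commensurating $E$), so $G'$ would leave the class $\calc$. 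Passing to $\hat E$ before forming the HNN extension repairs all of this at once.

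The second gap is the one you acknowledge: the verification that $\bar G$ belongs to $\calc'$, which is roughly half of the paper's proof, is left as an expectation. The key ingredient you do not name is the almost malnormality of $\bar P=\eta(P)$ in $\bar G$ (a standard consequence of relative hyperbolicity). It is what guarantees that the centralizer, the commensurator, and any ambient $D_\bbQ$-subgroup of a \emph{parabolic} pair of involutions (or parabolic element of infinite order) are entirely contained in $\bar P$, so that its type can be read off from $P\simeq\bar P$, hence from $G$; non-parabolic pairs and elements are then handled by the classification of elementary subgroups of $\bar G$ coming from Theorem \ref{variante}(\ref{it_lift}) and Lemma \ref{lem_fini2}, and transitivity on $D_\bbQ$-pairs again reduces to $\bar P$ because $D_\bbQ$-subgroups must be parabolic. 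A minor further point: your claim that $H$ remains suitable in $G'$ ``witnessed by the same pair'' is not quite right as stated; as in the paper, one should start from infinitely many pairwise non-commensurable hyperbolic elements of $H$ equal to their own commensurators and discard the finitely many whose powers are conjugate into $\hat E$ or its image in $P$, before the Bass--Serre tree argument shows their commensurators do not grow in $G'$.
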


We first deduce Theorem \ref{theoreme2} from Proposition \ref{prop_star}. 

% We will use the following standard fact.

% \begin{lemme}\label{prop_T}There exists a non-elementary torsion-free 2-generated hyperbolic group that has property (T).\end{lemme}

% \begin{proof} \Vcomment{J'ai un peu raccourci, et changé de ref}
% Start with any torsion-free hyperbolic group $\Gamma$ with property (T) (for instance, 
% a cocompact torsion-free lattice $\mathrm{Sp}(n,1)$ for $n\geq 2$\Vcomment{il semble qu'il faille $n\geq 2$}). 
% Since any pair of torsion-free hyperbolic groups have a common quotient which is torsion-free hyperbolic 
% \cite{Olshanskii, Champetier}, 
% such a common quotient $\Gamma'$ of $\Gamma$ and the free group $F_2$ satisfies our needs.
% % The group $\Gamma$ has property (T) as $\mathrm{Sp}(n,1)$ has property (T), and $\Gamma$ is hyperbolic. 
% %If $m=2$ there is nothing to do. If $m>2$, we can assume without loss of generality that $\langle x_1,x_2\rangle$ is a suitable subgroup of $\Gamma$, and Theorem \ref{variante} gives a non-elementary torsion-free hyperbolic quotient $Q$ of $\Gamma$ that is generated by the images of $x_1$ and $x_2$, and $Q$ has property (T) as a quotient of $\Gamma$.
% \end{proof}

\begin{proof}[Proof of Theorem \ref{theoreme2}]
    Let $G_0$ be a sharply 2-transitive group belonging to $\calc'$. 
    %\Vcomment{il me semble que ca manquait}
    %To ensure that $G_0$ contains a subgroup isomorphic to
    %$(\bbZ/2\bbZ)\times \bbZ$, we may define $G'_0=G_0*_{\grp{u}} (\grp{u}\times \bbZ)$ where $u\in G_0$ is an involution;
%    by Proposition \ref{prop_gen}, this still belongs to $\calc'$.
 %   \Vcomment{modif}
  Let $H_1$ be a 2-generated torsion-free hyperbolic group with Kazhdan property (T);
    the existence of such group is standard: for instance, take a common torsion-free hyperbolic quotient 
    (\cite{Olshanskii},\cite[Th. 5.19]{Champetier}) of a free group $F_2$ and a torsion-free cocompact lattice in $\mathrm{Sp}(2,1)$.
    Consider the free product $G_1=G_0*H_1$.
    %be its free product with any non-elementary torsion-free 2-generated hyperbolic group $H_1=\langle x,y\rangle$. We can take such a group $H_1$ that has property (T) (see Lemma \ref{prop_T} above).
    It is hyperbolic relative to $P_1:=G_0$, $H_1$ is a suitable subgroup of $G_1$, and it is in the class $\calc'$ by Proposition \ref{prop_gen}, so $(G_1,P_1,H_1)$ satisfies $(*)$.
    
    Let $E_1,\dots,E_n,\dots$ be an enumeration of all the virtually cyclic subgroups of $G_1$ (including all cyclic groups of order 2 or $\infty$).
    We construct inductively a chain of quotients using Proposition \ref{prop_star} as follows.
    Apply Proposition \ref{prop_star} to $(G_1,P_1,H_1)$ and $E_1\subset G_1$, and denote by $G_2=\bar G_1$
    the obtained quotient and by $\eta_1:G_1\onto G_2$ the quotient map. Define $H_2=\eta(H_1)$
    and $P_2=\eta(P_1)\simeq G_0$ so that the image of $E_1$ is contained in $H_2$ and conjugate in $P_2$.
    Since $(G_2,P_2,H_2)$ satisfies $(*)$, one can repeat the argument and obtain a chain of quotients
    $$G_1\xonto{\eta_1} G_2\xonto{\eta_2}\dots \xonto{\eta_{n-1}} G_n \xonto{\eta_n} \dots$$
    and subgroups $H_n=\eta_{n-1}(H_{n-1})$, $P_n=\eta_{n-1}(P_{n-1})\simeq G_0$ of $G_n$
    such that $(G_n,P_n,H_n)$ satisfies $(*)$ and such that the image of $E_1,\dots, E_{n-1}$ in $G_n$
    are contained in $H_n$ and conjugate in $P_n$.
    
    Let $G_\infty$ be the direct limit of this chain and denote the corresponding epimorphism by $\eta_\infty:G_1\ra G_\infty$. Denote $H_\infty=\eta_\infty(H_1)$ and $P_\infty=\eta_\infty(P_1)\simeq G_0$.
    Any element $g\in G_1$ is contained in some $E_j$, so its image in $G_{j+1}$ lies in $H_{j+1}$, hence $\eta_\infty(g)\in H_\infty$.
    This shows that $\eta_{\infty|H_1}$ is onto, so $G_\infty$, as a quotient of $H_1$, has property (T) and is 2-generated.
    Similarly, if $g\in E_j$, then its image in $G_{i+1}$ is conjugate in $P_{j+1}$, which shows that every element of $G_\infty$ has a conjugate in $P_\infty$.
    
    We finally check that $G_\infty$ acts sharply 2-transitively on its set of involutions. 
    Let $u,v\in G_\infty$ be a pair of involutions, and let us prove that it is conjugate to a pair of involutions of $P_\infty$. 
    There exist $n$ and preimages $u_n,v_n\in G_n$ of $u,v$ such that
    $u_n,v_n$ are involutions. Since every involution of $G_i$ is the image of an involution of $G_{i-1}$ 
    under $\eta_{i-1}$, there exist involutions $u_1,v_1\in G_1$ that respectively map to $u,v\in G_\infty$.
    Since the elementary group $\grp{u_1,v_1}=E_j$ for some index $j$, its image in $G_{j+1}$ 
    is conjugate in $P_{j+1}$ hence $\grp{u,v}$ is conjugate in $P_\infty$.
    Since $P_\infty\simeq G_0$ acts transitively on its pairs of involutions, so does $G_\infty$.
    
    It remains to check that the centralizer of a pair of distinct involutions $u,v\in G_\infty$ is trivial.
    If $z\in G_\infty$ centralizes $u\neq v$, then there exist $n$ and lifts $u_n,v_n,z_n\in G_n$ of $u,v,z$ such that
    $u_n\neq v_n$ are involutions and $z_n$ centralizes $u_n,v_n$. Since $G_n$ belongs to $\calc$, it acts freely on its pairs of involutions so $z_n=1$ and $z=1$ which concludes the proof.
%    Note that since $H_1=\langle x,y\rangle$ has property (T), then $G_{\infty}$ has property (T), since property (T) is preserved by taking quotients.
\end{proof}

There remains to prove Proposition \ref{prop_star}.

\begin{proof}[Proof of Proposition \ref{prop_star}]
Consider $(G,P,H)$ satisfying $(*)$. 
In a first step, we are going to embed $G$ in a group $G_1$ 
so that $E$ is conjugate to a subgroup of $P$ in $G_1$.
If $E$ is already parabolic in $G$, we let $G_1=G$ so assume otherwise.
This implies that $E$ is infinite because, since $G$ belongs to $\calc'$, every finite subgroup of $G$ has order at most 2 (Lemma \ref{lem_fini2})
and all involutions are conjugate (Definition \ref{dfn_class_C}).

Since $G$ is relatively hyperbolic, 
the commensurator $\hat E$ of $E$ is virtually cyclic infinite. By Lemma \ref{lem_fini2}, 
$\hat E$ is isomorphic to $\bbZ$, $D_\bbZ$ or $\bbZ\times (\bbZ/2\bbZ)$.
Note that if $\grp{h}$ is a maximal infinite cyclic subgroup of $\hat E$, then $h$ is isolated, a translation of type $D_\bbZ$, or a homothety accordingly.

Consider $E'$ a subgroup of $P$ isomorphic to $\hat E$: 
if $E$ is isomorphic to $\bbZ$ or $\bbZ\times(\bbZ/2\bbZ)$, Lemma \ref{lemme_exists_homothetie} ensures
that $E'$ exists, and if $\hat E\simeq =\grp{u,v}\simeq D_\bbZ$, 
we choose $E'=\grp{u_0,v_0}$ where $(u_0,v_0)$ is a pair of involutions of type $D_\bbQ$, 
which exists by Definition \ref{dfn_class_C}. 
Let $\sigma:\hat E\ra E'$ be an isomorphism and let $G_1$ be the HNN extension $G_1=\grp{G,t \ | \ txt\m=\sigma(x), \ x\in \hat E}$, and let us check that $(G_1,P,H)$ satisfies $(*)$.

The group $G_1$ belongs to $\calc'$: 
\begin{itemize}
    \item if $\hat E=\grp{h}\simeq \bbZ$, this follows from Corollary \ref{cor_HNN_isolated};
    \item if $\hat E \simeq D_\bbZ$, this follows from Corollary \ref{cor_HNN_paire} (where  $u,v\in \hat E$ are two involutions that generate $\hat E$ and $E'=\grp{u_0,v_0}\subset P$ is chosen so that $(u_0,v_0)$ has type $D_\bbQ$);
    \item if $\hat E= \grp{h}\times \grp{u}\simeq \bbZ\times (\bbZ/2\bbZ)$, then this follows from Corollary \ref{cor_HNN_homothetie}.
\end{itemize} 

The group $G_1$ is hyperbolic relative to $P$ by
Dahmani's combination theorem \cite{Dah_combination} as stated in \cite[Theorem 2.5]{Osi10}
where we view $G$ as hyperbolic relative to $\{P,\hat E\}$ (see for instance \cite[Th 2.1]{Osi10}).

%\Vcomment{il y a avait une phrase disant que les involutions sont conjugues dans $P$. Mais a ce stade, on sait que $G_1$ est dans $\calc'$, dont toutes les involutions sont conjuguees donc c'est automatique, n'est-ce pas ?}
%Any involution of $G_1$ is conjugate in the base group $G$ of the HNN extension, and therefore conjugate in $P$.
%Assertion 3 of condition (*) is obvious for $(G_1,H,P)$.

We now check that $H$ is suitable in $G_1$.
By \cite[Lemma 2.3]{Osi10}, $H$ contains 
infinitely many non-commensurable hyperbolic elements $h_1,h_2,\ldots\in H$ 
of infinite order such that $\Comm_G(\grp{h_i})=\grp{h_i}$.
Up to discarding at most two elements, we may assume
that no power of any $h_i$ is $G$-conjugate in $\hat E$ or $\hat E'$.
To check that $\Comm_{G_1}(\grp{h_i})=\grp{h_i}$,
consider $G_1\actson T$ be the Bass-Serre tree of the HNN extension defining $G_1$,
and $x\in T$ a vertex with stabilizer $G$.
For all $k\geq 1$, the set of fixed points of $h_i^k$ in $T$ is exactly $\{x\}$.
It follows that $\Comm_{G_1}(\grp{h_i})$ fixes $x$ so $\Comm_{G_1}(\grp{h_i})=\Comm_{G}(\grp{h_i})=\grp{h_i}$.
This shows that $H$ is suitable in $G_1$.

We thus have embedded $G$ in a group $G_1$ such that $(G_1,P,H)$ satisfies $(*)$, and such that $E$ is conjugate to a subgroup of $P$ in $G_1$.

%We now use Theorem \ref{variante} to 
%construct a quotient $\eta:G_1\onto \bar G$ of $G_1$ such that $\eta_{|G}$ is still surjective.
Let $\{t_1,t_2\}$ be a generating set of $E$, and recall that $t$ denotes the stable letter of the
HNN extension defining $G_1$; in the case where we defined $G=G_1$, we let $t=1$.
Let $\eta_1:G_1\onto \bar G$ be the quotient of $G_1$ given by Theorem \ref{variante} applied to the elements $t,t_1,t_2\in G_1$.
We denote by $\eta:G\ra\bar G$ the restriction of $\eta_1$.
Since $G_1$ is generated by $G$ and $t$, and since $\eta_1(t)\in \eta_1(H)\subset \eta_1(G)$,
it follows that $\eta:G\ra \bar G$ is onto.
%Since $\eta$ coincides with $\eta_1$ on $G_1$,
Theorem \ref{variante} says that $\eta$ is injective in restriction to $P$, 
that the group $\bar G$ is hyperbolic relative to $\eta(P)$,
that $\eta(H)$ is suitable in $\bar G$,
and that $\eta(E)=\grp{\eta(t_1),\eta(t_2)}$ is contained in $\eta(H)$.
Since $E$ is conjugate in $P$ in $G_1$, $\eta(E)$ is conjugate in $\eta(P)$ in $\bar G$.

It also says that any involution $v\in \bar G$ is the image of some involution $\tilde v$ in $G_1$.
It follows that $v$ is the image of some involution in $G$: $\tilde v$ is conjugate to some involution $\tilde v'\in G$,
and since $\eta$ is onto, there exists $g\in G$ such that $\eta(g\tilde v'g\m)=v$.
We note that since all involutions of $G$ are conjugate to each other, 
this is also the case in  $\bar G$.
%We conclude the proof by checking that every involution $\bar u\in \bar G$ is the image of an involution of $G$.
%By Theorem \ref{variante}(5), there exists an involution $u_1\in G_1$ that maps to $\bar u$ under $\eta_1$.
%We are done if $G=G_1$. Otherwise, $G_1$ being an HNN extension, $u_1$ is conjugate to an involution $u'\in G$.
%Thus $\eta(u')=\bar u^{\bar g}$ for some $\bar g\in \bar G$ and since $\eta$ is onto,
%$\bar u$ is the image of a conjugate of $u'$, namely $\bar u=\eta(u'^{\tilde g\m})$ where $\tilde g\in G$ is such that $\eta(\tilde g)=\bar g$.

There remains to check that $(\bar G, \eta( P),\eta(H))$ satisfies $(*)$, and the only remaining fact to prove 
is that $\bar G$ belongs to $\calc'$.
%Abusing notations, we identify $P$ with its image and denote $P$ instead of 
In what follows we denote $\bar P=\eta(P)$.

%To check that $(\bar G, \bar P,\eta(H))$ satisfies $(*)$, it remains to check that $\bar G$ belongs to $\calc'$. 
We first characterize finite and virtually cyclic subgroups of $\bar G$.
Since $G_1$ belongs to $\calc'$, every non-trivial finite subgroup of $G_1$ is isomorphic to $\bbZ/2\bbZ$ (see Lemma \ref{lem_fini2}),
and the same holds for $\bar G$ by Assertion (5) of Theorem \ref{variante}.
By Lemma \ref{lem_fini2}, it follows that every infinite virtually cyclic subgroup of $\bar G$ is isomorphic to $\bbZ$, $D_\bbZ$ or $\bbZ\times (\bbZ/2\bbZ)$.

We now check that $\bar G$ belongs to $\calc$ (Definition \ref{dfn_class_C}).
The following consequence of relative hyperbolicity will be useful: $\bar P$ is almost malnormal in $\bar G$,
\ie if $\bar P^g\cap \bar P$ is infinite, then $g\in \bar P$ (see for instance Lemma 8.3 in \cite{Osi10}).

First, for any pair of involutions $(u,v)\in\cali_{\bar G}^{(2)}$, 
the group $\grp{u,v}$ is infinite dihedral since finite subgroups of $\bar G$ have order at most 2.
We already noticed that $\bar G$ acts transitively on its involutions.
Let $(u,v)\in\cali_{\bar G}^{(2)}$ be a pair of involutions, and let us check that its centralizer $Z=Z_{\bar G}(\grp{u,v})$ is trivial and that it is
either or type $D_\bbZ$ or $D_\bbQ$.

%To check that $\bar G$ acts freely on $\cali_G^{(2)}$, it suffices to check that for any pair of  distinct involutions $u,v$, 
%the centralizer $Z$ of $\grp{u,v}$ is trivial. 

If $\grp{u,v}$ is not parabolic, then its commensurator 
$\Comm_{\bar G}(\grp{u,v})\supset {\bar G}(\grp{u,v})$ is virtually cyclic, necessarily isomorphic to $D_\bbZ$, 
so $(u,v)$ is of type $D_\bbZ$ and its centralizer $Z$ is trivial.
%so $Z=\{1\}$. This also shows that $\grp{u,v}$ is of type $D_\bbZ$.

If $\grp{u,v}$ is parabolic in $\bar G$, we may assume that $\grp{u,v}\subset \bar P$. 
Then $Z$ is contained in $\bar P$ by almost-malnormality of $\bar P$, so $Z$ is trivial because $\bar P$ is isomorphic to the subgroup $P$ of $G$, and $G$ belongs to $\calc$.
Denote by $\tilde u,\tilde v\in P$ the preimages of $u,v$ by the isomorphism $\eta_{|P}:P\xra{\sim} \bar P$.

%If $(u,v)$ is of type $D_\bbQ$ in $\bar P$, it also is in $\bar G$.
%Applying Lemma \ref{lem_C_relhyp} to $(G,P)$, we have that 
If $(\tilde u,\tilde v)$ is of type $D_\bbQ$ in $G$,
then $\grp{\tilde u,\tilde v}$ is contained in a subgroup $\tilde D$ of $G$ isomorphic to $D_\bbQ$.
Such a group $\tilde D$ has to be parabolic, hence conjugate in $P$.
It follows that $(u,v)$ is of type $D_\bbQ$ since $\grp{u,v}\subset \eta(\tilde D)\simeq D_\bbQ$.
If $(\tilde u,\tilde v)$ is of type $D_\bbZ$ in $G$, then
$\Comm_{P}(\grp{\tilde u,\tilde v})\simeq D_\bbZ$. Using the isomorphism $\eta$,
$\Comm_{\bar P}(\grp{u,v})\simeq D_\bbZ$.
By almost malnormality of $\bar P$, we get that $\Comm_{\bar G}(\grp{u,v})=\Comm_{\bar P}(\grp{u,v})$
so $(u,v)$ is of type $D_\bbZ$ in $\bar G$.

To prove that $\bar G$ belongs to $\calc$, there remains to prove that $\bar G$ acts transitively
on the set of pairs $(u,v)\in\cali_{\bar G}^{(2)}$ of type $D_\bbQ$.
But any such pair is conjugate in $\bar P$ because a group isomorphic to $D_\bbQ$ has to be parabolic,
so there remains to check that $\bar P$ acts transitively on its set of pairs $(u,v)\in\cali_{\bar P}^{(2)}$ of type $D_\bbQ$.
Viewing $\bar P\simeq P$ as a subgroup of $G$, % which is assumed to belong to $\calc$,
we know that any two pairs in $\cali_{P}^{(2)}$ of type $D_\bbQ$ are in the same $G$-orbit.
Since $P$ is almost malnormal in $G$, they are actually in the same $P$-orbit.
This concludes the proof that $\bar G$ is in the class $\calc$.

To check that $\bar G$ belongs to $\calc'$, there remains
to show that any element $ h\in\bar G$ of infinite order is a translation, a homothety or is isolated.
If $\grp{h}$ is not parabolic, this follows from the classification of elementary subgroups
(the three cases occurring when $\Comm_{\bar G}(\grp{h})$ is isomorphic to $D_\bbZ$, $\bbZ\times (\bbZ/2\bbZ)$
or $\bbZ$ respectively).
Thus, we may assume that $h\in \bar P$. 
Let $\tilde h\in P$ be its preimage in $P$ under the isomorphism $\eta:P\xra{\sim} \bar P$.
If $\tilde h$ is a translation or a homothety in $G$, it stays so in $\bar G$ because $\eta$ does not kill any involution.
If $\tilde h$ is isolated in $G$, it is isolated in $P$ and $h$ is isolated in $\bar P$.
Its commensurator $\Comm_{\bar G}(\grp{h})$ is contained in $\bar P$ by almost malnormality. It follows that $h$ is isolated in $\bar G$, which proves that $\bar G$ belongs to $\calc'$ and concludes the proof.
\end{proof}

\renewcommand{\refname}{Bibliography}
\bibliographystyle{alpha}
\bibliography{biblio2}

\newpage

\setlength{\parindent}{0pt}
Simon André
\\
Institut für Mathematische Logik und Grundlagenforschung, 
\\
WWU Münster, Einsteinstraße 62 48149 Münster, Germany.
\\
E-mail address: \href{mailto:sandre@wwu.de}{sandre@uni-muenster.de}

\medskip

Vincent Guirardel
\\
Univ Rennes, CNRS, IRMAR - UMR 6625, F-35000 Rennes, France.
\\
E-mail address: \href{mailto:vincent.guirardel@univ-rennes1.fr}{vincent.guirardel@univ-rennes1.fr}

\end{document}